\documentclass[12pt]{article}

\addtolength{\voffset}{-1.5cm}
\addtolength{\hoffset}{-2cm}
\addtolength{\textheight}{3cm}
\addtolength{\textwidth}{4cm}

\pagestyle{headings}

\usepackage{amssymb}
\usepackage[leqno]{amsmath}
\usepackage{amsthm}
\usepackage{hyperref}
\usepackage{graphicx}
\usepackage{enumerate}
\usepackage{pictexwd,dcpic}

\newtheorem{theorem}{Theorem}[section]
\newtheorem{corollary}[theorem]{Corollary}

\newtheorem{lemma}[theorem]{Lemma}

\theoremstyle{definition}

\newtheorem{remark}[theorem]{Remark}

\newtheorem{definition}[theorem]{Definition}

\newcommand\coker{\operatorname{coker}}
\newcommand\image{\operatorname{image}}

\def\setR{\mathbb R}

\def\setC{\mathbb C}

\def\setP{\mathbb P}

\def\setZ{\mathbb Z}

\def\J{\mathcal J}
\def\M{\mathcal M}

\def\A{\mathcal{A}}
\def\J{\mathcal{J}} 

\newcommand{\delbar}{{\ensuremath{\bar\partial }}}
\def\id{\mathrm{Id}}
\newcommand{\norm}[1]{{\ensuremath{|\!|#1|\!|}}}
\def\dvol{d\mathrm{vol}}

\title{Existence and Stability of Foliations\\ by
  $J$--Holomorphic Spheres}
\author{R. Hind \and J. von Bergmann}
\date{\today}

\begin{document}

\maketitle
\abstract{We study the existence and stability of holomorphic
  foliations in dimension greater than 4 under perturbations of the
  underlying almost complex structure. An example is given to show
  that, unlike in dimension 4, $J$--holomorphic foliations are not
  stable under large perturbations of almost complex structure.  }

\section{Introduction}

The theory of pseudoholomorphic curves was introduced in Gromov's
seminal paper \cite{gromov}. There is a Fredholm theory showing that,
for generic almost-complex structures $J$, pseudoholomorphic, or
$J$-holomorphic, curves appear in finite dimensional families, with
the dimension given by the Riemann-Roch theorem. Furthermore, in the
presence of a taming symplectic form, suitable moduli spaces of
$J$-holomorphic curves are compact modulo bubbling. These results have
many important applications in symplectic topology. Notably they lead
to Gromov-Witten invariants and Floer homology, which have been the main
methods for establishing rigidity results in symplectic and contact
topology.

Applied to symplectic manifolds of dimension $4$ the theory of
pseudoholomorphic curves is especially powerful and it becomes
possible to prove classification results which are as yet inaccessible
in higher dimensions. For example, symplectic forms on $S^2 \times
S^2$ are classified, see \cite{SW_GW} and \cite{gromov}, their
symplectomorphism groups are well understood, see \cite{gromov} and
\cite{abreu_mcduff}, and the Lagrangian spheres are all known to be
symplectically equivalent, see \cite{hind_spheres}. These results all
rely on the existence of foliations by $J$-holomorphic spheres. More
precisely, they utilize the following theorem of Gromov. We say that a
homology class $A \in H_2(X)$ is $\omega$--minimal if $\omega(A) =
\min_{B \in H_2(X), \omega(B)>0} \omega(B)$.

\begin{theorem}\label{thm:4d-foliation}
  Let $(X,\omega)$ be a symplectic $4$-manifold with a tamed
  almost-complex structure $J$ and suppose that there exists an
  embedded symplectic sphere in a homology class $A$ satisfying $A
  \bullet A =0$ and $A$ is $\omega$--minimal.

  Then $X$ is foliated by the images of $J$-holomorphic spheres
  homologous to $A$. The foliations vary smoothly with the
  almost-complex structure $J$.
\end{theorem}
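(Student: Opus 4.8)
Here is the plan I would follow.

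\medskip
\noindent\textbf{Step 1 (Fredholm setup and automatic regularity).}
I would work with the moduli space $\M_A(J)$ of unparametrized simple $J$--holomorphic spheres in the class $A$. By the adjunction formula applied to the given embedded symplectic sphere, $c_1(A) = A\bullet A + 2 = 2$, so the index of the linearized $\delbar$--operator is $2c_1(A)+4 = 8$, and after quotienting by the $6$--dimensional reparametrization group $PSL(2,\C)$ the expected dimension of $\M_A(J)$ is $\dim X - 2 = 2$. The crucial point, special to dimension four, is that one does not need to perturb $J$: since the spheres in question are embedded with $A\bullet A \ge 0$, the automatic transversality results available in real dimension four (Hofer--Lizan--Sikorav) show that the relevant operator is surjective for \emph{every} tamed $J$, so $\M_A(J)$ is a smooth $2$--manifold. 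Along the way I would record that such a sphere has normal bundle of degree $A\bullet A = 0$ which, again by positivity, must be the balanced bundle $\mathcal O\oplus\mathcal O$; hence the sections of the normal bundle are a $2$--complex-parameter family, all nowhere vanishing, so nearby $J$--holomorphic spheres sweep out a neighborhood of a given one.

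\medskip
\noindent\textbf{Step 2 (Compactness from $\omega$--minimality).}
I would use $\omega$--minimality of $A$ to rule out all degenerations. If a sequence in $\M_A(J)$ converged to a cusp curve with components carrying classes $B_1,\dots,B_k$, $k\ge 2$, then each $\omega(B_i)>0$, hence $\omega(B_i)\ge\omega(A)$, contradicting $\sum_i\omega(B_i)=\omega(A)$; likewise a class $A$ cannot be a multiple cover $A=mB$ with $m\ge 2$, since then $0<\omega(B)=\omega(A)/m<\omega(A)$. Therefore $\M_A(J)$ is compact, and moreover the count $n_A$ of $A$--spheres through two generic points is a well-defined number, invariant under deformation of the tamed almost complex structure (the space of tamed $J$'s being connected).

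\medskip
\noindent\textbf{Step 3 (Existence and the geometry of the spheres).}
To see $\M_A(J)$ is nonempty I would pick a tamed $J_0$ making the given embedded symplectic sphere $\Sigma$ holomorphic; by the balanced-normal-bundle computation $\Sigma$ is the unique $A$--sphere through two generic points and the count there is transverse, so $n_A = 1$. By the deformation invariance of Step 2, for the given $J$ there is an $A$--sphere through any two (hence any one) point of $X$, and by the compactness of Step 2 in fact through \emph{every} point. Positivity of intersections together with $A\bullet A = 0$ forces two distinct spheres of $\M_A(J)$ to be disjoint, and the adjunction inequality forces each such sphere to be embedded.

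\medskip
\noindent\textbf{Step 4 (Conclusion and smooth dependence).}
Finally I would assemble the universal curve $\mathcal U\to\M_A(J)$ with its evaluation map $ev\colon \mathcal U\to X$. By Step 1 this map is a local diffeomorphism (moving the sphere covers the normal directions, moving the marked point covers the tangent directions), and by Step 3 it is a bijection; hence it is a diffeomorphism exhibiting $X$ as an $S^2$--bundle over the surface $\M_A(J)$, whose fibers are precisely the leaves of the desired foliation. Running the same argument over the (contractible) space of tamed almost complex structures gives the construction as a smooth family, so the foliations vary smoothly with $J$. The main obstacle throughout is that the two dimension-four phenomena invoked above --- positivity of intersections and automatic transversality (equivalently, the balanced splitting of the normal bundle) --- are exactly what fails in higher dimensions; without them neither the embeddedness and disjointness of the spheres nor the regularity of the moduli space is available, which is precisely the point this paper goes on to address.
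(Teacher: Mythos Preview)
The paper does not supply a proof of this theorem at all: it is quoted in the introduction as a classical result of Gromov, serving as background and motivation for the higher-dimensional discussion that follows. So there is no ``paper's own proof'' to compare against. Your outline is the standard argument (automatic transversality in dimension four, $\omega$--minimality to exclude bubbling and multiple covers, positivity of intersections and adjunction to get disjoint embedded spheres, then the evaluation map as a degree-one local diffeomorphism), and it is essentially correct.

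One slip worth fixing: in Step~2--3 you speak of the count $n_A$ of $A$--spheres through \emph{two} generic points. With $c_1(A)=2$ in a $4$--manifold the unparametrized moduli space is $2$--dimensional, so the one-pointed space has dimension $4=\dim X$ and the relevant Gromov--Witten number is the count of $A$--spheres through \emph{one} generic point (this is exactly the invariant $GW^X_{0,1,A}(pt)=1$ appearing later in the paper). Through two generic points the expected count is zero, and indeed once the foliation is established two generic points lie on distinct leaves. Adjusting this does not affect the structure of your argument.
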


The aim of paper is to investigate the extent to which this remains
true when $X$ has higher dimension.

It turns out that in general Theorem \ref{thm:4d-foliation} is false
if $X$ is allowed to have dimension greater than $4$. The existence of
$J$-holomorphic spheres in the class $A$ can be guaranteed at least
for an open set of almost-complex structures by imposing an index
constraint. But even if a foliation is known to exist for a particular
$J$, it is unstable in the sense that varying $J$, even in the most
generic fashion, can cause the foliation to degenerate.

To be more precise, we recall that given a family of tame
almost-complex structures $J_t$, $0 \le t \le 1$, on the symplectic
manifold $X^{2n}$ we can define the universal moduli space
\begin{eqnarray}
{\cal M} = \{(u,t)|u:S^2 \to X\ J_t\mathrm{-holomorphic},\,[u(S^2)]=A
\}.\label{eq:M}
\end{eqnarray}
Suppose that
$c_1(A)=2$, then ${\cal M}$ has virtual dimension
$2n+5$ and if the family $\{J_t\}$ is regular then ${\cal M}$ is a manifold (with boundary) of dimension $2n+5$. Furthermore, in the generic case the projection map $T:{\cal M} \to [0,1]$, $(u,t) \mapsto t$ is a Morse function and for all but finitely many $t$ the fiber ${\cal M}_t$ is a manifold of dimension $2n+4$ consisting of the $J_t$-holomorphic spheres in the class $A$. For such regular $t$ there is a smooth evaluation map $$e_t:{\cal M}_t \times _G S^2 \to X$$ where the equivalence relation $G$ is
reparameterization of the holomorphic spheres. Both ${\cal M}_t \times _G S^2$ and $X$ are smooth $2n$-dimensional manifolds.

\begin{definition} We say that $X$ is {\it foliated} by
  $J_t$-holomorphic spheres in the class $A$ if the map $e_t$,
  when restricted to some connected component of its domain, is a
  homeomorphism.

  We say that $X$ is {\it smoothly foliated} by $J_t$-holomorphic
  spheres in the class $A$ if the map $e_t$, when restricted to some
  connected component of its domain, is a diffeomorphism.
\end{definition}

When $X$ is $4$-dimensional, or when the almost-complex structure
$J_t$ is integrable, these two notions coincide, but in higher
dimensions there exist foliations (at least if we allow nonregular
curves) for which the corresponding evaluation map is a
smooth homeomorphism that is not a diffeomorphism. An example is given
in Remark \ref{rmk:foln}.

The following result shows that Theorem \ref{thm:4d-foliation} fails completely in dimension greater than four. Let $(M,\omega_M)$ be a symplectic manifold of dimension at least four.

\begin{theorem}\label{thm:counterexample}
  There exists a regular family $J_t$ of tame almost-complex
  structures on $(X,\omega)=(S^2\times
  M,\sigma_0\oplus \omega_M)$ such that ${\cal M}$ has a
  component ${\cal N}$ where the curves in $t^{-1}(0) \cap {\cal N}$
  form a foliation of $X$ but the curves in $t^{-1}(1) \cap {\cal N}$
  do not, the curves are not disjoint.
\end{theorem}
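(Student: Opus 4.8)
The plan is to realize, for a suitable family $\{J_t\}$, the simplest way in which the evaluation map $e_t$ can cease to be a diffeomorphism, namely a fold. Starting from $e_0$ a diffeomorphism, one engineers the family so that at some $t_0\in(0,1)$ two infinitesimally nearby $J_{t_0}$-holomorphic spheres of $\mathcal N$ acquire a common point with equal tangent planes, and for $t\in(t_0,1]$ this tangency opens into a transverse intersection; then at $t=1$ the curves of $\mathcal N_1$ are no longer pairwise disjoint, robustly so. This mechanism is genuinely unavailable when $\dim X=4$: there two distinct $J$-holomorphic spheres homologous to an $A$ with $A\bullet A=0$ are forced to be disjoint by positivity of intersections — which is exactly why Theorem~\ref{thm:4d-foliation} holds and why the hypothesis $\dim M\ge 4$ is needed.

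Concretely, take $J_0=j\oplus J_M$ on $X=S^2\times M$. The slices $S^2\times\{m\}$ are $J_0$-holomorphic spheres in the class $A$; they are regular, since the linearized operator acts on normal sections as $\bar\partial$ on a trivial holomorphic bundle over $S^2$, hence surjectively, and they foliate $X$, with $e_0$ a diffeomorphism on that component. Since $\mathcal M$ consists of honest maps $S^2\to X$, at $t=0$ every curve in the class $A$ is a reparameterization of a slice, whatever $M$ is; let $\mathcal N$ be the component containing these. Now fix a small Darboux ball $B\subset M$ and, for $c\in B$, deform the slice $S^2\times\{c\}$ to an embedded symplectic sphere $C^{(t)}_c=\{(z,c+t\,\gamma_c(z)):z\in S^2\}$ in the class $A$, where $\gamma_c\colon S^2\to B$ is $C^0$-small, vanishes for $c$ near $\partial B$ and for $t=0$, and is chosen — with two distinguished, very close levels $c_0,c_1$ and a point $z_\ast$ — so that $C^{(t)}_{c_0}$ and $C^{(t)}_{c_1}$ are disjoint for $t<t_0$, share a point with equal tangent planes at $t=t_0$, and meet in a single transverse double point for $t>t_0$. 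One then builds a tame $J_t$, equal to $J_0$ off $S^2\times B$, for which every sphere $C^{(t)}_c$ is $J_t$-holomorphic: along the disjoint part of each sphere prescribe $J_t$ to preserve its tangent plane and tame $\omega$; at each double point of the family prescribe $J_t$ on the $4$-dimensional span of the (transverse) tangent planes, or on the common plane at $t=t_0$, so as to preserve them and tame $\omega$ there — possible since a transverse pair of symplectic $2$-planes that is close to an $\omega$-orthogonal pair admits a taming $J$ preserving both; then extend over $X$ keeping the open taming condition. Since $t\mapsto C^{(t)}_c$ is, for each $c$, a path in $\mathcal M$ from the slice $S^2\times\{c\}$ to $C^{(1)}_c$, the whole family lies in $\mathcal N$; in particular $C^{(1)}_{c_0},C^{(1)}_{c_1}\in\mathcal N_1$, and they meet.

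Two verifications remain. First, regularity: one perturbs $\{J_t\}$ rel endpoints, within tame deformations supported in $S^2\times B$, to make $\mathcal M$ transversally cut out; this is generic for $1$-parameter families, and since the $C^{(1)}_{c_i}$ can be arranged regular and their intersection is transverse, both features persist under a $C^\infty$-small perturbation, so $\mathcal N_1$ still contains two intersecting spheres while $\mathcal N_0$ is still the slice foliation. Second, one checks that $\mathcal N_0$ really consists of all the slices — which holds as noted above, irrespective of $M$ — and that the deformation introduces no other components interfering with the conclusion: it does not, since any curve of $\mathcal N$ is joined through $\mathcal M$ to a slice, and $J_M$-holomorphic spheres of $M$ (if any) only produce disjoint components of $\mathcal M$. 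It follows that $t^{-1}(0)\cap\mathcal N$ is the slice foliation while $t^{-1}(1)\cap\mathcal N$ contains the two spheres $C^{(1)}_{c_0}$, $C^{(1)}_{c_1}$ meeting at a point, so it is not a foliation and its curves are not disjoint.

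The main obstacle is carrying out the fold construction coherently. One wants the deformation $\{t\gamma_c\}$ to be $C^0$-small, so that the relevant spheres stay near slices and the component $\mathcal N$ and the absence of spurious curves remain under control; yet two curves sitting near \emph{distinct} slices must actually cross, which forces their $C^0$-deviation from those slices to exceed the slice separation (so $c_0,c_1$ must be correspondingly close), and for the crossing to be transverse and renderable by a \emph{tame} $J_t$ one needs $\gamma_{c_1}$ to have large derivative near $z_\ast$ (so that the tangent plane to $C^{(1)}_{c_1}$ there is nearly $\omega$-orthogonal to the slice plane) while remaining $C^0$-small. At the same time the family $\{C^{(t)}_c\}$ must be geometrically tame — finitely many double points, no higher-order contact between distinct curves — so that a single tame $J_t$ makes all its members holomorphic, and the final perturbation to a regular family must not destroy the transverse intersection at $t=1$. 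Fitting these quantitative requirements together, and checking taming and regularity throughout, is the technical heart of the argument; in the language of the evaluation maps it amounts to forcing $e_t$, a diffeomorphism at $t=0$, to develop a fold, which is the first way the injectivity of a generic $1$-parameter family of self-maps of $X$ can break down.
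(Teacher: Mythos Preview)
Your approach is quite different from the paper's, and it has a genuine gap at the perturbation step.

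The paper never writes down two explicit intersecting spheres. It works entirely at the linearised level along a single fixed curve $C_0$, the zero section of the trivial $\C^2$-bundle $N\to S^2$. The heart of the argument (Lemma~\ref{lem:D_super}) uses Hirsch's immersion theorem to produce a real--linear Cauchy--Riemann operator $D$ on $N$ that is \emph{superregular} yet has nontrivial cokernel, hence a fifth kernel element $e_5$ independent of the superregular basis $e_1,\dots,e_4$ in $L^2$ but pointwise dependent on them somewhere on $S^2$. One then perturbs to a family $D_s$ of \emph{regular} operators (Lemmas~\ref{lem:L-Y} and~\ref{lem:D_st}) whose kernels are approximately spanned by $e_1,e_2,e_3,\,se_4+(1-s)e_5$; for $s$ near $-1$ the determinant of this frame changes sign on $S^2$, so $D_s$ is \emph{stably} non-superregular. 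These operators are realised as linearisations, at the zero section, of fibrewise-linear tame almost complex structures on $N$, transplanted into $X=S^2\times M$. The final perturbation to a regular family is supported away from the zero section, so $C_0$ stays holomorphic with linearisation $D_s$, and the sign change of the determinant forces nearby curves in $\mathcal N$ to meet $C_0$.

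The gap in your argument is the assertion that the ``transverse'' intersection of $C^{(1)}_{c_0}$ and $C^{(1)}_{c_1}$ persists under the perturbation making $\{J_t\}$ regular. Two embedded $2$-spheres in a $6$-manifold \emph{never} meet transversally in the differential-topological sense: $T_pC_0+T_pC_1$ has dimension at most $4<6$, so an isolated intersection point is a codimension-$2$ coincidence and is destroyed by a generic perturbation of $J_1$ (perturb $J_1$ along $C^{(1)}_{c_0}$ away from $p$ to push its $J_1'$-holomorphic deformation off $C^{(1)}_{c_1}$). What \emph{would} be stable is a fold of $e_1$ --- a curve in $\mathcal N_1$ at which the determinant of the normal evaluation map changes sign, i.e.\ stable non-superregularity --- and while you invoke the word ``fold'' at the end, your construction neither produces such a curve nor verifies the sign change; arranging this is exactly the content of Sections~\ref{sec:constr-line-oper}--\ref{sec:constr-family-almost}. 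A smaller issue: a $4$-parameter family of $2$-spheres in $X^6$ that foliates at $t=0$ but not at $t=1$ cannot have ``finitely many double points'', since the double-point locus of a fold of $e_t\colon B^4\times S^2\to X^6$ is positive-dimensional, so prescribing $J_t$ sphere-by-sphere as you propose does not define a single almost complex structure.
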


In fact, we can take $J_0$ to be a product structure on $S^2\times M$
and so ${\cal M}_0$ has a single component consisting of curves with
images $S^2 \times \{z\}$ for $z \in M$. Fixing a point $0 \in M$ we
can further assume that the corresponding sphere $C_0 = S^2 \times
\{0\}$ is $J_t$-holomorphic and regular for all $t$. However there
exists a two parameter family of curves $C_r$ in ${\cal M}_1$ which
includes $C_0$ but with $C_r \cap C_0 \neq \emptyset$ for all $r$.

An analog of Theorem \ref{thm:4d-foliation} does remain true
if we impose restrictions on the $J_t$. In this paper we will explain
how to guarantee the existence and stability of foliations in the case
of integrable complex structures and additional restrictions on the
curvature.

\begin{theorem}\label{thm:integrable}
  Let $(X,\omega,J)$ be K\"ahler with holomorphic bisectional
  curvature bounded from below by $c>-\pi/\omega(A)$, where $A\in
  H_2(X;\setZ)$ is an $\omega$--minimal homology class with
  $GW^X_{0,1,A}(pt)=1$. Then $X$ is smoothly foliated by
  $J$--holomorphic spheres.
\end{theorem}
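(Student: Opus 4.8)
The plan is to prove that the moduli space $\mathcal M$ of $J$-holomorphic spheres in the class $A$ is a compact complex manifold, that the space $\mathcal M\times_G S^2$ of such spheres carrying one marked point is a holomorphic fibre bundle over $\mathcal M$ with fibre $S^2$, and that the evaluation map $e\colon \mathcal M\times_G S^2\to X$ is a diffeomorphism. Transporting the bundle projection $\mathcal M\times_G S^2\to\mathcal M$ back along $e^{-1}$ then exhibits $X$ as the total space of a smooth $S^2$-bundle whose fibres are exactly the images of the $J$-holomorphic spheres in the class $A$, which is the asserted smooth foliation.

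First I would record the elementary reductions. Since $GW^X_{0,1,A}(pt)$ is a genuine number, the genus-zero one-marked-point moduli space in the class $A$ has virtual dimension $\dim_\RR X=2n$, which forces $c_1(A)=2$. The hypothesis that $A$ is $\omega$-minimal rules out stable-map degenerations: in a Gromov limit of spheres in the class $A$ every non-constant component has symplectic area at least $\omega(A)$, so there is exactly one, and by comparing areas it again represents $A$; stability then forbids ghost components, while a $d$-fold multiple cover with $d\ge2$ would represent a class of area $\omega(A)/d<\omega(A)$. Hence the Gromov compactification of $\mathcal M\times_G S^2$ equals $\mathcal M\times_G S^2$, it is compact, and every curve it carries is simple; moreover a nontrivial reparametrisation fixing a simple curve would be of finite order and would thus exhibit the curve as a multiple cover, so $G$ acts freely and, once transversality is established, $\mathcal M$ and $\mathcal M\times_G S^2$ are honest manifolds.

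The heart of the argument is a curvature estimate forcing every curve to have trivial normal bundle. Fix a non-constant simple curve $u\colon S^2\to X$ in the class $A$; by the Birkhoff--Grothendieck splitting $u^*TX\cong\bigoplus_{j=1}^n\mathcal O(b_j)$ with $\sum_j b_j=c_1(A)=2$. Let $L\subset u^*TX^*$ be the holomorphic line subbundle of largest degree, carrying the induced metric. Griffiths' curvature-decreasing property for holomorphic subbundles, combined with the lower bound $c$ on the holomorphic bisectional curvature of $X$ --- which controls the curvature of $u^*TX^*$ restricted to $L$ in the direction tangent to the curve --- yields, after integration over $S^2$, a bound $\deg L\le -\tfrac{c}{2\pi}\,\omega(A)$, which since $c>-\pi/\omega(A)$ is strictly less than $1$; as $\deg L\in\ZZ$ we conclude $\deg L\le0$. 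But $\deg L=\max_j(-b_j)=-\min_j b_j$, so every $b_j\ge0$. Since $du$ is a nonzero holomorphic section of $\operatorname{Hom}(TS^2,u^*TX)\cong\bigoplus_j\mathcal O(b_j-2)$, some $b_j\ge2$; combined with $b_j\ge0$ and $\sum_j b_j=2$ this forces $u^*TX\cong\mathcal O(2)\oplus\mathcal O^{\oplus(n-1)}$. Then $\operatorname{Hom}(TS^2,u^*TX)\cong\mathcal O\oplus\mathcal O(-2)^{\oplus(n-1)}$, so $du$ spans its trivial summand and hence vanishes nowhere: $u$ is an immersion with trivial normal bundle $N_u\cong\mathcal O^{\oplus(n-1)}$.

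The rest is bookkeeping. Since $H^1(u^*TX)=0$ for every such curve, the moduli problem is everywhere unobstructed, so $\mathcal M$ is a compact complex manifold of complex dimension $n-1$, $\mathcal M\times_G S^2$ a compact complex manifold of complex dimension $n$, and the projection $\mathcal M\times_G S^2\to\mathcal M$ a holomorphic fibre bundle with fibre $S^2$. Because $N_u$ is trivial, hence globally generated, and $u$ is an immersion, the differential of $e$ is surjective at every point, so $e$ is a local diffeomorphism between closed $2n$-manifolds; being a proper local homeomorphism onto the connected manifold $X$, it restricts to a surjective covering on each component of its domain, and these covering degrees --- all at least $1$, all counted positively in $GW^X_{0,1,A}(pt)$ because $e$ is holomorphic --- sum to $1$. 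Hence there is a single component and $e$ has degree one, i.e.\ $e$ is a diffeomorphism; pushing the bundle projection forward along $e^{-1}$ exhibits $X$ as a smooth $S^2$-bundle over $\mathcal M$ whose fibres are the disjoint embedded spheres $u(S^2)$, varying smoothly and filling $X$ --- a smooth foliation of $X$ by $J$-holomorphic spheres in the class $A$. The single genuine obstacle is the curvature estimate: the threshold $c>-\pi/\omega(A)$ is precisely what the subbundle curvature inequality requires in order to force $\deg L\le0$, and it is exactly the possibility of unbalanced normal bundles, in the absence of such a bound, that underlies the instability phenomenon of Theorem \ref{thm:counterexample}.
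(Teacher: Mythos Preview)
Your proof is correct and follows essentially the paper's route: the Griffiths curvature inequality forces every Grothendieck summand of $u^*TX$ to have nonnegative degree, whence $u^*TX\cong\mathcal O(2)\oplus\mathcal O^{\oplus(n-1)}$, and then regularity, superregularity, and the degree-one holomorphic evaluation map conclude. The only cosmetic difference is that you dualize and apply curvature-\emph{decrease} to line subbundles of $u^*T^*X$, whereas the paper applies the equivalent curvature-\emph{increase} to line quotients of $u^*TX$ (Lemmas~\ref{lem:quotient_curvature}--\ref{lem:cur_k}); you are also more explicit than the paper about compactness from $\omega$-minimality and about why the evaluation map is a covering.
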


We remark that if $X$ is a product $(M,k\omega) \times (S^2, \sigma)$,
where $(M,\omega,J)$ is K\"ahler and $\sigma$ is the area form on
$S^2$, then a product complex structure will satisfy the hypotheses of
Theorem \ref{thm:integrable} whenever $k$ is sufficiently large, and
so will any other integrable complex structure that is sufficiently
close to the product one.

Of central importance to the stability of foliations is the notion of
superregularity as defined in \cite{donaldson_discs}.
\begin{definition}\label{def:superregular}
  A real--linear Cauchy--Riemann operator $D$ on a complex vector
  bundle over $S^2$
  is called {\em regular} if $D$ is surjective. It is called {\em
    superregular} if $\ker D$ contains a collection of sections that
  are linearly independent over each point in $S^2$. A choice of such a
  collection of sections is called a {\em superregular basis} for $D$.

  A $J$--holomorphic sphere $u$ is called regular if the induced
  real--linear Cauchy--Riemann operator $D_u$ on $u^\ast TX$ is
  regular. An immersed $J$--holomorphic sphere $u$ is called
  superregular if $D_u$
  acting on sections of the normal bundle is superregular.
\end{definition}
Note that regularity does not imply superregularity and vice
versa. For example, no regular linearized operator at a
$J$--holomorphic curve in a 4--manifold with self-intersection number
$\ne 0$ is superregular. We will give an example of a superregular
operator that is not regular in Section \ref{sec:constr-line-oper}.

Another way to understand what it means for a linearized operator to
be regular and superregular is the following.  Suppose $u$ is regular
so that the moduli space of curves near $u$ is a smooth
manifold. Then, in the language of Section 3.4 in \cite{mcduff2}, the
evaluation map from the moduli space of $J$--holomorphic curves near a
map $u:S^2\rightarrow X$ is transverse to all $x\in \image(u)\subset
X$ if and only if $u$ superregular. Hence all curves $u$ in a smooth foliation are superregular.

The paper is arranged as follows. We first establish the non-existence
result Theorem \ref{thm:counterexample} in Section
\ref{sec:counterexample}. Then we discuss the integrable case in
Section \ref{sec:stab-foli-integr} to prove Theorem
\ref{thm:integrable}.

\section{Non-stability of foliations}
\label{sec:counterexample}
For clarity of exposition we will restrict ourselves to work in
dimension 6. It is clear how to generalize this to higher dimension,
e.g. by taking the product with another symplectic manifold with
compatible almost complex structure. However, our construction does
not work in dimension less than 6 since in that case Hirsch's theorem
about immersions does not apply, and consequently Lemma
\ref{lem:D_super} does not hold.

\subsection{Superregular Operator with Cokernel}
\label{sec:constr-line-oper}

Here we will construct a superregular Cauchy-Riemann operator with nontrivial cokernel. This immediately gives examples of foliations by holomorphic spheres which are not superregular.

Throughout this section $N=S^2\times\setR^4$ denotes the trivial
bundle. Let $\{\bar e_i\}_{i=1}^4$ be the canonical basis of
$\setR^4$ and $J_0$ the canonical complex structure. Using the
trivialization of $N$ we will frequently identify sections of $N$ with
functions from $S^2$ into $\setR^4$.

Recall the structure of a real--linear Cauchy--Riemann $D$ operator
acting on sections $\xi$ of the complex vector bundle $(N,J_0)$ with
trivial connection $\nabla$ via
\begin{eqnarray*}
  D\xi=\frac12\left(\nabla \xi+J_0\nabla\xi\circ j\right)+\frac12 Y\xi
  =\delbar_{J_0}\xi+\frac12Y\xi
\end{eqnarray*}
where $Y:N\rightarrow\Lambda^{0,1}(T^\ast S^2\otimes_\setC N)$ is a
vector bundle homomorphism.

Recall Definition \ref{def:superregular} of our use of the terms
regular and superregular.
\begin{lemma}\label{lem:J(z)-hol}
  Let $D$ be a superregular real--linear Cauchy--Riemann operator on
  $(N,J_0)$ with superregular basis $\{e_i\}_{i=1}^4$. Let $\Phi:S^2\times
  \setR^4\rightarrow N$ be the corresponding trivialization,
  i.e. $\Phi(z,x)=\sum_{i=1}^n x_i\,e_i(z)$.

  Then for a function $f:S^2\rightarrow \setR^4$ we have $D
  \Phi(z, f(z))=\Phi_\ast\delbar_J f$, where
  $J:S^2\rightarrow End(\setR^4)$, is given by $\Phi^\ast J_0$.
\end{lemma}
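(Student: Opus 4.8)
The plan is to reduce the identity to a pointwise statement in each fiber $N_z$ and then expand both sides in the two trivializations. First I would observe that, since $\{e_i\}$ is a superregular basis, the map $\Phi(z,\cdot)\colon\setR^4\to N_z$ is a real-linear isomorphism for every $z\in S^2$, and that by the very definition $J(z)=\Phi(z,\cdot)^{-1}\circ J_0\circ\Phi(z,\cdot)$ this isomorphism is complex linear from $(\setR^4,J(z))$ to $(N_z,J_0)$; equivalently $\Phi(z,J(z)\bar e_i)=J_0\,\Phi(z,\bar e_i)=J_0\,e_i(z)$. Hence $\Phi$ carries $(0,1)$-forms valued in $(\setR^4,J(z))$ isomorphically onto $(0,1)$-forms valued in $(N_z,J_0)$, so the right-hand side $\Phi_\ast\delbar_J f$ really is a section of $\Lambda^{0,1}(T^\ast S^2\otimes_\setC N)$ and the statement makes sense even though $J$ is only a $z$-dependent, a priori non-integrable, complex structure. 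The only feature of $\delbar_J$ that will be used is the pointwise formula $\delbar_J f=\frac12\bigl(df+J\,df\circ j\bigr)$.

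Next I would write the section in question as $\xi:=\Phi(\cdot,f(\cdot))=\sum_i f_i\,e_i$ and apply the Leibniz rule for the real-linear Cauchy--Riemann operator $D=\delbar_{J_0}+\frac12 Y$. Since each $f_i$ is a real-valued scalar function, the trivial connection gives $\nabla(f_i e_i)=df_i\otimes e_i+f_i\,\nabla e_i$; substituting into the defining formula for $D$ and using that $Y$ is a bundle homomorphism, hence $\setR$-linear on fibers, yields
\[
D(f_i e_i)=\tfrac12\bigl(df_i\otimes e_i+(df_i\circ j)\otimes J_0 e_i\bigr)+f_i\,De_i .
\]
The term $(df_i\circ j)\otimes J_0 e_i$ is exactly what comes out of the $J_0\nabla\xi\circ j$ piece of $D$, since $J_0\,(df_i\otimes e_i)\circ j$ sends $v$ to $df_i(jv)\,J_0 e_i$.

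Now comes the one substantive input: $De_i=0$ for every $i$, because a superregular basis lies in $\ker D$. Summing the displayed identity over $i$ gives $D\xi=\frac12\sum_i\bigl(df_i\otimes e_i+(df_i\circ j)\otimes J_0 e_i\bigr)$. On the other side, expanding in the standard basis $\{\bar e_i\}$ of $\setR^4$ one has $\delbar_J f=\frac12\sum_i\bigl(df_i\otimes\bar e_i+(df_i\circ j)\otimes J(z)\bar e_i\bigr)$, and applying the fiberwise-linear map $\Phi(z,\cdot)$ together with $\Phi(z,\bar e_i)=e_i(z)$ and $\Phi(z,J(z)\bar e_i)=J_0\,e_i(z)$ (this last equality being precisely the definition of $J$) converts it into the identical expression. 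Therefore $D\xi=\Phi_\ast\delbar_J f$, which is the claim.

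I do not expect a real obstacle here: the content is the Leibniz rule for $D$ combined with the hypothesis $De_i=0$, and everything else is bookkeeping between the two trivializations. The only point requiring a little care is keeping track of the two roles played by the complex structure -- the fixed $J_0$ on the target bundle $N$ versus the moving $J(z)$ on the model fiber $\setR^4$ -- and verifying that $\Phi(z,\cdot)$ is complex linear between them, so that it genuinely intertwines the two antiholomorphic projections; this is where the $df\circ j$ terms on the two sides are matched.
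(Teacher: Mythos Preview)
Your proof is correct and follows the same approach as the paper's own proof: write $\xi=\sum_i f_i e_i$, apply the Leibniz rule together with $De_i=0$, and identify the resulting expression with $\Phi_\ast\delbar_J f$. The paper compresses this into a single displayed line, while you spell out more carefully why the intertwining $\Phi(z,J(z)\bar e_i)=J_0 e_i(z)$ makes the two $(0,1)$-projections match; this extra bookkeeping is welcome but does not change the argument.
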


\begin{proof}
  \begin{eqnarray*}
    D\Phi(z,f(z))=D\sum_{i=1}^4 f_i(z)e_i(z)
    =\sum_{i=1}^4\frac12\{df_i+J_0\,df_i\circ j\}e_{i}
    =\Phi_\ast(\delbar_J f).
  \end{eqnarray*}
\end{proof}

We need the following elementary observation.
\begin{lemma}\label{lem:Y}
  Given any four sections $\{e_i\}_{i=1}^4$ of $N=S^2\times \setC^2$
  that are linearly independent over each $z\in S^2$, there exists a
  unique real--linear Cauchy--Riemann operator $D=\delbar_0+Y$, where
  $\delbar_0$ is the canonical complex Cauchy--Riemann operator, $Y\in
  \Lambda^{0,1}T^\ast S^2\otimes \setC^2$, and $\{e_i\}_{i=1}^4$ is a
  superregular basis for $D$, i.e. so that $D e_i=0$ for $i=1,\ldots
  ,4$.
\end{lemma}
\begin{proof}
  Let $\nu_i=D_0\,e_i$. Since the $\{e_i\}_{i=1}^4$ are linearly
  independent for all $z\in S^2$ we may define $Y$ via
  $Y_z\,e_i(z)=-\nu_i(z)$. Thus $D e_i=0$ so $\{e_i\}_{i=1}^4$ is a
  superregular basis. Conversely, if $D e_i=0$ for $i=1,\ldots,4$ then
  $Y_z(e_i(z))=-\nu_i(z)$, defining $Y$ uniquely. 
\end{proof}

We now aim to construct a superregular real--linear Cauchy--Riemann
operator on $N$ that has a non--trivial cokernel. The following Lemma
clears some topological obstructions.
\begin{lemma}\label{lem:F_0}
  There exists a complex bundle monomorphism $F_0:TS^2\rightarrow
  \underline{\setC^2}$.
\end{lemma}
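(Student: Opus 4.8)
The plan is to exhibit such a monomorphism explicitly by choosing a convenient Hermitian metric on $TS^2$ and writing $S^2=\CC P^1$. Recall that $TS^2\cong T\CC P^1\cong \mathcal{O}(2)$ as a complex line bundle, and that a complex bundle monomorphism $F_0: TS^2\to\underline{\CC^2}$ is the same data as a pair of sections $(s_1,s_2)$ of the dual bundle $T^\ast S^2\cong\mathcal{O}(-2)$—no wait, that gives the wrong bundle; more precisely, a bundle map $TS^2\to\underline{\CC^2}$ is a pair of elements of $\Hom(TS^2,\underline{\CC})=T^\ast S^2\otimes\underline{\CC}$, i.e. a pair of $(1,0)$-forms, and the condition that $F_0$ be injective on each fiber is that these two $(1,0)$-forms do not simultaneously vanish at any point. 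So what I really need is two holomorphic (or merely smooth) sections of $\mathcal{O}(-2)$ with no common zero—but $\mathcal{O}(-2)$ has no nonzero holomorphic sections, so I must use smooth sections. This is fine: smoothly, $\mathcal{O}(-2)$ is just a complex line bundle of degree $-2$ over $S^2$, and a pair of smooth sections with no common zero always exists because the only obstruction to such a pair lives in $H^2(S^2;\pi_1(S^3))=0$ (the relevant fibration is $\CC^2\setminus\{0\}\to \CC^2\setminus\{0\}/\text{scaling}$, or more simply: a nowhere-zero section of $L\oplus L$ exists iff the rank-2 bundle $L\oplus L$ admits a trivial sub-line-bundle, and obstruction theory over the 2-complex $S^2$ reduces to vanishing of $e(L\oplus L)\in H^4=0$ after splitting off the trivial summand).

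Concretely, I would argue as follows. The dual map $F_0^\ast:\underline{\CC^2}\to T^\ast S^2$ being a surjection of complex bundles is equivalent to $F_0$ being a monomorphism (fiberwise linear algebra over $\CC$). A surjection $\underline{\CC^2}\to L$ onto a line bundle $L=T^\ast S^2$ is the same as a choice of two sections $\sigma_1,\sigma_2$ of $L$ that are nowhere simultaneously zero, equivalently a map $S^2\to\CC P^1$ (sending $z$ to $[\sigma_1(z):\sigma_2(z)]$) together with an identification of $L$ with the pullback of $\mathcal{O}(1)$ under this map. Since $\deg T^\ast S^2=-2$, I need a map $g:S^2\to\CC P^1=S^2$ of degree $-2$, and then $L\cong g^\ast\mathcal{O}(1)$ has degree $-2$, matching $T^\ast S^2$; as complex line bundles over $S^2$ are classified by degree, $g^\ast\mathcal{O}(1)\cong T^\ast S^2$, and pulling back the tautological surjection $\underline{\CC^2}\to\mathcal{O}(1)$ gives the desired surjection, hence dually the monomorphism $F_0$.

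The only point requiring care—and the step I expect to be the main (minor) obstacle—is checking that the isomorphism class of $g^\ast\mathcal{O}(1)$ really is determined by the degree and really equals that of $T^\ast S^2$, so that the pullback surjection has the correct target after composing with that isomorphism; this is standard ($H^2(S^2;\ZZ)\cong\ZZ$ classifies complex line bundles via $c_1$, and $c_1$ is multiplicative under pullback so $c_1(g^\ast\mathcal{O}(1))=\deg(g)\cdot c_1(\mathcal{O}(1))=-2$). Taking transposes/duals then yields $F_0:TS^2\hookrightarrow\underline{\CC^2}$ as required. I would present the argument in the dual formulation throughout since "nowhere-vanishing pair of sections of a degree $-2$ line bundle, obtained by pulling back $\mathcal{O}(1)$ along a degree $-2$ map" is the cleanest way to make everything explicit.
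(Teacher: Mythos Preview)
Your argument is correct and is essentially the paper's proof in dual form: the paper pulls back the tautological inclusion $K=\mathcal{O}(-1)\hookrightarrow\underline{\setC^2}$ along a map $\tilde f_0:S^2\to\setC\setP^1$ of the appropriate degree and uses the classification of complex line bundles over $S^2$ by Chern class to identify $\tilde f_0^{\,*}K\cong TS^2$, while you pull back the dual surjection $\underline{\setC^2}\twoheadrightarrow\mathcal{O}(1)$ along a degree $-2$ map to obtain a surjection onto $T^*S^2$ and then dualize. The underlying mechanism---realizing $TS^2$ (or its dual) as the pullback of the tautological bundle along a map of the correct degree, with the identification supplied by $c_1$---is the same in both.
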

Here $\underline{\setC^2}$ denotes the trivial $\setC^2$--bundle over
$S^2=\setC\setP^1$.

\begin{proof}
  Consider the diagram in Figure \ref{fig:1}.
  Let $K\rightarrow \setC\setP^1$ be the tautological bundle,
  i.e.
  \begin{eqnarray*}
    K=\{(v,[z:w]|\,v\in \mathrm{span}_\setC(z,w),\quad
    (z,w)\in\setC^2\}
  \end{eqnarray*}
  and let $\tilde f_0:S^2\rightarrow \setC\setP^1$ be a degree 2
  map. Then $TS^2$ and $\tilde f_0^\ast K$ have the same Chern class,
  so they are isomorphic complex vector bundles. Let $\tilde
  F_0:TS^2\rightarrow K$ be a bundle isomorphism (covering $\tilde
  f_0$).

  Let $\iota:K\rightarrow \underline{\setC^2}$ be the standard inclusion, i.e.
  $\iota(v,[z:w])=(v,[z:w])\in\setC^2\times\setC\setP^1=\underline{\setC^2}$
  and set
  \begin{eqnarray*}
    F_0:TS^2\rightarrow\underline{\setC^2},\qquad
    F_0=\iota\circ\tilde F_0
  \end{eqnarray*}
  $F_0$ is an injective complex vector bundle homomorphism
  because $\tilde F_0$ and $\iota$ are.
\end{proof}

\begin{figure}
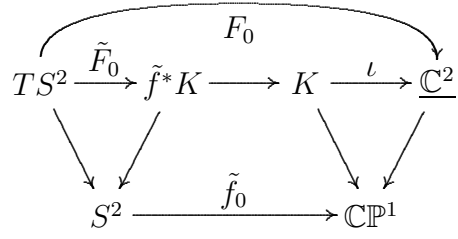
\label{fig:1}
  \centering
  \parbox{10cm}{
    \begindc{\commdiag}[5]
    \obj(5,20)[A]{$TS^2$}
    \obj(15,20)[B]{$\tilde f^\ast K$}
    \obj(25,20)[C]{$K$}
    \obj(35,20)[D]{$\underline{\setC^2}$}
    \obj(10,10)[a]{$S^2$}
    \obj(30,10)[b]{$\setC\setP^1$}
    \mor{A}{B}{$\tilde F_0$} \mor{B}{C}{} \mor{C}{D}{$\iota$}
    \mor{A}{a}{} \mor{B}{a}{} \mor{a}{b}{$\tilde f_0$} \mor{C}{b}{}
    \mor{D}{b}{}
    \cmor((5,22)(9,25)(20,26)(31,25)(35,22))
    \pdown(20,24){$F_0$}
    \enddc
  }
  \caption{The construction of the map $F_0$.}
\end{figure}

Set $f_0:S^2\rightarrow \setC^2$, $f_0(z)=0$ and let
$F_0:TS^2\rightarrow f_0^\ast T\setC^2=\underline{\setC^2}$ as in Lemma
\ref{lem:F_0}. We aim to construct an actual immersion
$f_1:S^2\rightarrow \setR^4$ so that $(f_1,F_1=df_1)$ has the same
topological data as $(f_0,F_0)$.

By Theorem 6.1 of \cite{hirsch} (or alternatively by the
$h$--principle) there exists an immersion $f_1:S^2\rightarrow \setR^4$
with $F_1=df_1:S^2\rightarrow f_1^\ast T\setR^4=\underline{\setR^4}$
together with a homotopy $f_t:S^2\rightarrow \setR^4$ connecting $f_0$
and $f_1$ covered by a homotopy of (real) monomorphisms
$F_t:TS^2\rightarrow \underline{\setR^4}$ connecting $F_0$ and
$F_1$. Here $\underline{\setR^4}$ is again the trivial bundle and we
implicitly made use of the canonical (real) isomorphism
$\underline{\setC^2}=\underline{\setR^4}$.

We need one more definition to construct a superregular operator with
non--trivial cokernel.  Let $\langle\cdot,\cdot\rangle$ denote the
standard inner product on $\setR^4$ and let $\J$ denote the space of
complex structures on $\setR^4$ and let $\A$ be the set of injective
(real--linear) homomorphisms from $\setC$ into $\setR^4$. We define
the map
\begin{eqnarray}\label{eq:Phi}
  \Phi:\A\rightarrow\J
\end{eqnarray}
in the following way. $A$ defines a splitting $\setR^4=V\oplus W$,
where $V=\image A$ and $W=V^\perp$. Define $J=\Phi(A)$ to be the
unique $J\in\J$ that leaves this splitting invariant, makes $A$
complex linear, and satisfies
\begin{eqnarray*}
  \langle Je_1,e_2\rangle=1,\qquad \langle Je_1,e_1\rangle=0
\end{eqnarray*}
on an oriented orthonormal basis $e_1,e_2$ of $W$.

\begin{lemma}\label{lem:D_super}
  There exists a superregular real--linear Cauchy--Riemann operator
  $D$ with non--trivial cokernel.
\end{lemma}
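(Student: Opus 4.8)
The plan is to build the operator $D$ in the form $\delbar_0 + Y$ guaranteed by Lemma \ref{lem:Y}, where the superregular basis $\{e_i\}_{i=1}^4$ is engineered so that $D$ has a cokernel. The key idea is to use the immersion $f_1:S^2\rightarrow\setR^4$ and its derivative $F_1=df_1$ constructed above via Hirsch's theorem. First I would use $F_1$ together with the pointwise construction $\Phi:\A\rightarrow\J$ to produce a complex structure $J_z=\Phi(d_zf_1)$ on $\setR^4$ varying smoothly with $z\in S^2$; by construction $J_z$ makes the image of $d_zf_1$ a complex line, and the two tangent directions of $S^2$ at $z$ are mapped by $df_1$ to a complex basis of that line. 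Equivalently $f_1:(S^2,j)\rightarrow(\setR^4,\{J_z\})$ is ``$\delbar_{J(z)}$--holomorphic'' in the sense that $df_1$ is complex linear at each point — which is exactly the statement that $\delbar_J f_1 = 0$ in the notation of Lemma \ref{lem:J(z)-hol} after a suitable choice of trivialization.

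The second step is to reverse the logic of Lemma \ref{lem:J(z)-hol}: the homotopy $F_t$ from $F_0$ to $F_1$, covering the homotopy $f_t$ from the constant $f_0\equiv0$ to $f_1$, gives a path of complex structures interpolating between the constant $J_0$ (coming from $F_0$, which we arranged in Lemma \ref{lem:F_0} to land in a fixed complex line pattern) and $\{J_z\}$. Since $GL(\setR^4)$ retracts onto a subgroup one can produce, from this homotopy, a trivialization $\Phi:S^2\times\setR^4\rightarrow N$ whose sections $e_i(z)=\Phi(z,\bar e_i)$ are linearly independent at each point and satisfy $\Phi^\ast J_0 = \{J_z\}$ in an appropriate sense. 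Feeding these $e_i$ into Lemma \ref{lem:Y} produces $D=\delbar_0+Y$ with superregular basis $\{e_i\}$, so $D$ is superregular by construction; and by Lemma \ref{lem:J(z)-hol}, $D\Phi(z,f(z)) = \Phi_\ast\delbar_{J}f$, so the kernel and cokernel of $D$ are identified with those of the $\delbar$--type operator attached to the almost complex structure $\{J_z\}$ along the immersion $f_1$.

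The third step is to compute that cokernel and show it is nontrivial. The point of threading everything through an honest immersion $f_1:S^2\rightarrow\setR^4$, rather than just a section of the bundle, is that the relevant linearized operator is now the normal $\delbar$--operator of a $J$--holomorphic sphere in a $4$--manifold. Because $f_1$ lives in $\setR^4$ and $F_0$ was built from a degree $2$ map to $\setC\setP^1$ (Lemma \ref{lem:F_0}), the pullback $f_1^\ast T\setR^4$ splits, up to homotopy, into a tangent part of degree $2$ and a normal line bundle of degree $-2$; the Cauchy--Riemann operator on a degree $-2$ bundle over $S^2$ has index $2(-2+1)=-2$, hence necessarily a cokernel of dimension at least $2$. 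So the operator $D$ we build, while superregular, cannot be surjective. I would make this precise by checking that the first Chern number of the normal bundle is indeed $-2$ — this is forced by $c_1(TS^2)=2$, $c_1(\underline{\setC^2})=0$, and the monomorphism $F_0$, so $c_1(\text{normal}) = 0 - 2 = -2$.

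The main obstacle is the second step: converting the abstract Hirsch--theoretic homotopy data $(f_t,F_t)$ into a concrete global trivialization $\Phi$ of $N=S^2\times\setR^4$ for which $\Phi^\ast J_0$ equals the prescribed $\{J_z\}$ and simultaneously the coordinate sections $e_i$ form a superregular basis — one must check that the bundle monomorphism condition survives and that no new topological obstruction appears when passing from the line-bundle picture of Lemma \ref{lem:F_0} to the full rank-$2$ trivialization. This is where dimension $6$ (i.e. $\setR^4$ as the fiber) is used, since Hirsch's immersion theorem for $S^2\hookrightarrow\setR^4$ is exactly what makes the homotopy $F_t$ realizable by an actual immersion $f_1$; in lower dimension the section $f_1$ could not be taken to be an immersion and the identification in Lemma \ref{lem:J(z)-hol} would not have the geometric meaning needed to read off the cokernel. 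Once the trivialization is in hand, everything else is bookkeeping with Chern numbers and the Riemann--Roch index formula.
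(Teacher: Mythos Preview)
Your first two steps match the paper's construction almost exactly: define $J(z)=\Phi(F_1(z))$, observe that $\Phi\circ F_0\equiv J_0$, lift the homotopy $\Phi\circ F_t$ through the fibration $GL(4,\setR)\to\J$ to obtain $G_t$ with $G_0=\id$, and set $e_i(z)=G_1(z)\bar e_i$; Lemma~\ref{lem:Y} then furnishes the superregular operator $D$. So far, so good.

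The problem is your third step. You attempt to show $\coker D\neq 0$ by splitting the trivial bundle along $f_1$ into a tangent line of degree $2$ and a normal line of degree $-2$, and then invoking Riemann--Roch on the normal part. But you never verify that the operator $D$ (equivalently $\delbar_J$) is upper triangular with respect to this splitting, and there is no reason it should be: the subbundle $\image(df_1)$ varies with $z$, and differentiating a section of it produces normal components. Without that compatibility there is no map from $\coker$ of the normal operator into $\coker D$, and the degree $-2$ computation tells you nothing about $D$ itself.

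The irony is that you already have everything you need and you wrote it down in your first paragraph: you observed that $\delbar_J f_1=0$, i.e.\ that $e_5(z):=G_1(z)f_1(z)$ lies in $\ker D$ by Lemma~\ref{lem:J(z)-hol}. Since $f_1$ is a nonconstant immersion it is linearly independent of the four constant functions $\bar e_i$, hence $e_5$ is linearly independent of $e_1,\dots,e_4$, and so $\dim\ker D\ge 5$. The index of a real--linear Cauchy--Riemann operator on the trivial rank--$2$ bundle over $S^2$ is $4$, so $\dim\coker D\ge 1$. That is exactly the paper's argument, and it replaces your entire third step. Drop the normal--bundle discussion; the extra kernel element does all the work.
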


\begin{proof}
  For each $z\in S^2$ define $J(z)=\Phi\circ F_1(z)$, where $\Phi$ is
  the map from Equation (\ref{eq:Phi}). Note that $\Phi\circ
  F_0(z)=J_0$, so $\Phi\circ F_0$ is covered by the map
  $G_0=\id:S^2\rightarrow GL(4,\setR)$, where the projection
  $\pi:GL(4,\setR)\rightarrow \J$ is given by $\pi g=g^\ast
  J_0=g^{-1}\circ J\circ g$. The map  $\pi:GL(4,\setR)\rightarrow \J$ is a
  bundle projection and thus has the homotopy lifting property. Let
  $G_t$ be a lift of the homotopy $\Phi\circ F_t$ to $GL(4,\setR)$.

  \begin{center}\parbox{5cm}{
      \begindc{\commdiag}[5]
      \obj(5,5)[A]{$S^2\times I$}
      \obj(20,5)[J]{$\J$}
      \obj(20,15)[G]{$GL(4,\setR)$}
      \mor{A}{J}{$\Phi\circ F_t$}
      \mor{G}{J}{$\pi$}
      \mor(5,5)(18,15){$G_t$}[\atleft,\dasharrow]
      \enddc
    }\end{center}

  For $i=1,\ldots,4$ define sections $e_i(z)=G_1(z)\bar e_i$, where
  $\{\bar e_i\}_{i=1}^4$ denotes the canonical basis of $\setR^4$. By
  definition these are linearly independent for each $z\in S^2$, so by
  Lemma \ref{lem:Y} we can choose $Y$ so that these sections satisfy
  $D\,e_i=0$, where $D=D_0+Y$. Thus $D$ is superregular with
  superregular basis $\{e_i\}_{i=1}^4$.

  Set $e_5(z)=G_1(z)f_1(z)$, and note that $\delbar_{J(z)}f_1=0$ by
  the definition of $J(z)$. So by Lemma \ref{lem:J(z)-hol} $e_5$
  satisfies $D\,e_5=0$.

  Thus the kernel of $D$ is at least 5--dimensional, so $D$ has
  non--trivial cokernel.
\end{proof}

\begin{remark} \label{rmk:foln} Adding a suitable multiple of $e_4$ to
  $e_5$ we may assume that the pointwise inner product $\langle
  e_4(z),e_5(z)\rangle\ge 0$ but that the strict inequality does not
  hold. Then consider the map $$e:\setR^4 \times S^2 \to
  N,$$ $$(t_1,t_2,t_3,t_4,z)\mapsto \sum_{i=1}^3 t_i e_i + t_4 e_5 +
  t_4^2 e_4.$$ This is clearly a smooth map giving a foliation of $N$
  but its differential is not an isomorphism wherever $z$ satisfies
  $\langle e_4(z),e_5(z)\rangle = 0$.
\end{remark}

\subsection{Family of Almost Complex Structures}
\label{sec:constr-family-almost}

Here we will extend the example from the previous section to construct a family $D_s$ of Cauchy-Riemann operators for $s \in [-1,2]$ such that $D_s$ is regular for all $s$, superregular for $s$ close to $2$, but not superregular for $s$ close to $-1$. The example can be globalized to produce the counterexample needed for Theorem \ref{thm:counterexample}.

Let $N\rightarrow S^2$ be a trivial complex rank 2 vector bundle. Fix
an inner product $\langle\cdot,\cdot\rangle$ on N and let
\begin{eqnarray*}
  D:\Gamma N\rightarrow \Omega^{0,1}(N)
\end{eqnarray*}
be a superregular real--linear Cauchy--Riemann operator with
non--trivial cokernel as given by Lemma \ref{lem:D_super}. Let
$K=\ker(D)$ and fix a superregular basis $\{e_i\}_{i=1}^4$ and let
$e_5\in K$ be another section that is linearly independent from the
$\{e_i\}_{i=1}^4$.  Without loss of generality assume that $e_5$ is
perpendicular to $e_1,e_2,e_3$ in $L^2$ and the pointwise inner
product $h(z)=\langle e_4(z),e_5(z)\rangle\ge 0$ with
$h(0)=0$. Assume that $\norm{e_i}=1$, $i=1,\ldots,4$ and scale $e_5$
so that there exists $p_0\in S^2$ with
\begin{eqnarray}
  \label{eq:p_0}
  h(p_0)=\langle e_4(p_0),e_5(p_0)\rangle=1.
\end{eqnarray}

Let $C=\coker(D)$ be spanned by the orthonormal basis
$\{\eta_i\}_{i=1}^n\in C$. Let $\tilde D:K^\perp\rightarrow C^\perp$ be
the restriction of $D$ to $K^\perp$ and let $P:C^\perp\rightarrow
K^\perp$ denote its inverse. Let
\begin{eqnarray*}
  \pi_C:\Omega^{0,1}(N)\rightarrow C
\end{eqnarray*}
denote the orthogonal projection in $L^2$.

\begin{lemma}\label{lem:L-Y}
  There exists family of vector bundle homomorphisms $Y_s:N\rightarrow
  \Lambda^{0,1}(T^\ast S^2\otimes_\setC N)$, $s\in[-1,1]$ so that
  \begin{eqnarray*}
    L_s=\pi_C\circ Y_s:\Gamma(N)\rightarrow C
  \end{eqnarray*}
  is surjective and
  \begin{eqnarray*}
    K_s=\ker(L_s)\cap K=span\{e_1,e_2,e_3,s\,e_4+(1-s)e_5\}.
  \end{eqnarray*}
\end{lemma}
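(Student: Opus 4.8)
The plan is to exploit that a bundle homomorphism $Y_s$ is pinned down by its values on the pointwise frame $\{e_i\}_{i=1}^{4}$, and that $L_s=\pi_C\circ Y_s$ sees those values only through their $L^2$--pairings with the cokernel basis. So I will take $Y_se_1=Y_se_2=Y_se_3=0$, which at once forces $e_1,e_2,e_3\in\ker L_s$, and keep the single section $\beta_s:=Y_se_4\in\Omega^{0,1}(N)$ as the free parameter. The one structural fact I need concerns the coefficients $a_i$ in the pointwise expansion $e_5=\sum_{i=1}^{4}a_i(z)\,e_i(z)$: tracing through the construction of Lemma \ref{lem:D_super}, where $e_i=G_1\bar e_i$ and $e_5=G_1f_1$, one gets $e_5=\sum_i (f_1)_i\,e_i$, so $a_i=(f_1)_i$, and hence $a_4$ is, up to the subsequent affine renormalisation of $e_5$ in Section \ref{sec:constr-line-oper}, the fourth component of the immersion $f_1$. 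Since Hirsch's theorem leaves the immersion essentially free, I will choose $f_1$ so that $a_4$ is non-constant on $S^2$ (a generic condition).

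Next I would run the computation that turns the statement into linear algebra. For a kernel element $\xi=\sum_{i=1}^{5}c_ie_i\in K$ one has $Y_se_5=\sum_j a_j Y_se_j=a_4\beta_s$ (the other $Y_se_j$ vanish), hence $Y_s\xi=(c_4+c_5a_4)\beta_s$ pointwise, and therefore
\[
L_s\Bigl(\sum_{i}c_ie_i\Bigr)=\pi_C\bigl((c_4+c_5a_4)\beta_s\bigr).
\]
In the case at hand $C$ is one--dimensional (the real index of $D$ is $4$ and $\dim K=5$; for general $n$ one simply carries one scalar per $\eta_j$). Writing $g_s(z):=\langle\beta_s(z),\eta_1(z)\rangle$ the right side is $\bigl(c_4I_0+c_5I_1\bigr)\eta_1$ with $I_0=\int_{S^2}g_s\,\dvol$ and $I_1=\int_{S^2}a_4\,g_s\,\dvol$. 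Since $se_4+(1-s)e_5$ has coordinates $(c_4,c_5)=(s,1-s)$, the required identity $\ker(L_s)\cap K=\mathrm{span}\{e_1,e_2,e_3,se_4+(1-s)e_5\}$ holds exactly when $(I_0,I_1)$ is a non-zero multiple of $(1-s,-s)$; and $L_s$ is onto $C$ precisely when $g_s\not\equiv 0$.

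It then remains to produce, smoothly in $s\in[-1,1]$, a section $\beta_s$ with $g_s\not\equiv 0$ and $(I_0,I_1)=(1-s,-s)$. Because $a_4$ is non-constant, the linear map $g\mapsto\bigl(\int g,\int a_4 g\bigr)$ carries $C^\infty(S^2,\setR)$ onto $\setR^2$, so I may fix $\chi_0,\chi_1\in C^\infty(S^2,\setR)$, supported off the finite zero set of $\eta_1$, with $(\int\chi_0,\int a_4\chi_0)=(1,0)$ and $(\int\chi_1,\int a_4\chi_1)=(0,1)$, and put $g_s:=(1-s)\chi_0-s\chi_1$; this is never identically zero and has the prescribed integrals. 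Setting $\beta_s:=g_s\,\eta_1/|\eta_1|^2$ on $\{\eta_1\neq 0\}$, extended by zero, gives a smooth family with $\langle\beta_s,\eta_1\rangle=g_s$, and extending $Y_s$ from the frame by $Y_se_i=0$ for $i\le 3$ and $Y_se_4=\beta_s$ produces a homomorphism with the two asserted properties. For general $n$ one chooses $\beta_s$ so that the functions $\langle\beta_s,\eta_j\rangle$ are linearly independent and each satisfies the analogous integral condition, which is possible for the same reason.

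The step I expect to be the real obstacle is the innocuous-looking non-constancy of $a_4$. If $a_4$ were constant, then $(I_0,I_1)=(\,\alpha\!\int g_s,\int g_s)$ up to scaling and the target $(1-s,-s)$ could be matched for at most one value of $s$, so the whole construction genuinely depends on having arranged the immersion in Section \ref{sec:constr-line-oper} so that $a_4$ varies. Granting that, what is left is the linear algebra above together with a routine smooth-in-$s$ selection of bump functions.
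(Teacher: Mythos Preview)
Your approach is genuinely different from the paper's and, in the case $\dim C=1$, it is correct and more explicit. The paper does not use the global frame $\{e_i\}_{i=1}^4$; instead it works over a small open set $U\subset S^2$ where the $s$--dependent frame $\{e_1,e_2,e_3,e_4^s\}$ is pointwise independent, prescribes $Y_se_i^s=g_i^s$ with values in $C^\perp$ (so that the $e_i^s$ are automatically in $\ker L_s$), and then runs an \emph{inductive perturbation} through three auxiliary lemmas: one organises smooth families $f_j^s$ complementary to the $e_i^s$ in $K$, one uses unique continuation for $D$ and $D^\ast$ to show the relevant linear map into $\mathrm{Hom}(\hat K,\hat C)$ is nonzero, and one perturbs the $g_i^s$ to drop $\dim K_s$ by one. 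Your single--parameter ansatz $Y_se_i=0$ for $i\le3$, $Y_se_4=\beta_s$, together with the explicit bump--function construction of $g_s$, bypasses all of this, at the price of reaching back into Lemma~\ref{lem:D_super} to arrange that $a_4=(f_1)_4$ is non-constant. Since the whole section is building an example, that back--reference is legitimate.

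There is, however, a real gap when $n=\dim C>1$. The paper only establishes $\dim K\ge 5$, so your assertion that $n=1$ is unjustified, and the closing sentence does not repair this. If $n>1$ there are further kernel elements $e_6,\dots,e_{n+4}$, and with your ansatz one gets $L_se_j=\pi_C(a_{4j}\,\beta_s)$ where $a_{4j}$ is the $e_4$--coefficient of $e_j$ in the frame. For $\ker(L_s)\cap K$ to be exactly the prescribed $4$--plane you need the map $(c_4,\dots,c_{n+4})\mapsto \pi_C\bigl((\sum_j c_j a_{4j})\beta_s\bigr)$ to have rank $n$ with kernel the line $(s,1-s,0,\dots,0)$; but nothing in the setup prevents, say, $a_{46}=a_{45}$ (take $e_6-e_5$ in the pointwise span of $e_1,e_2,e_3$ with non-constant coefficients), in which case no choice of a single $\beta_s$ can achieve this. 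You can close the gap either by arguing that a small perturbation of $D$ within the locus of superregular operators with nontrivial cokernel brings $\dim K$ down to $5$, or by allowing all four $Y_se_i$ to be nonzero sections of $C^\perp$ and redoing the linear algebra---at which point you are essentially reproducing the paper's induction.
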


In order to prove this we need the following three lemmas. We will make
use of the notation introduced above. Also recall the point $p_0\in
S^2$ from Equation (\ref{eq:p_0}), so the family of sections $e_1^s$
defined via $e_i^s=e_i$ for $i=1,\ldots, 3$ and
$e_4^s=s\,e_4+(1-s)e_5$ are linearly independent vectors over $p_0$
for all $s\in[-1,1]$. Let $U\subset S^2$ be an open neighborhood of
$p_0$ so that the $\{e^s_i(z)\}_{i=1}^4$ still are linearly
independent vectors over all $z\in \overline U$. Let $V\subset
C^\perp$ denote the subspace of smooth sections in $C^\perp$ that are
supported in $U$. Then we can define a family of homomorphisms $Y_s$
from $N$ to $\Lambda^{0,1}(T^\ast S^2\otimes_\setC N)$ via functions
$g_i^s:[-1,1]\times U\rightarrow V$ by
\begin{eqnarray*}
  Y_s(e_i^s)=g_i^s.
\end{eqnarray*}
Set $L_s=\pi_C\circ Y_s:\Gamma(N)\rightarrow C$ and
$K_s=\ker(L_s)$. Note that by construction $e^s_i\in K_s$ for all
$i=1,\ldots 4$. We will prove Lemma \ref{lem:L-Y} by finding suitable
$g_i^s$ so that $L_s$ is surjective. Alternatively we can define a map
\begin{eqnarray}\label{eq:F}
  F:[-1,1]\times V^4\rightarrow [-1,1]\times Hom(K,C),\qquad
  F(g_1^s,\ldots,g_4^s)=L_s|_{K}
\end{eqnarray}
and we need to show that the image of $F$ contains a family
of surjective homomorphisms.

\begin{lemma}\label{lem:1}
  Suppose that $g_i^s$ are given so that $\mathrm{dim}(K_s) \le p+4$
  for some $0\le p\le m$ and all $s\in[-1,1]$.  Then there exists a
  smooth family of linearly independent sections $\{f_j^s\}_{j=1}^m\in
  K$ that are orthogonal to $e_i^s$ for $i=1,\ldots,4$ such that
  $\{f_j^s\}_{j=p+1}^m \in K_s^\perp \cap K$ for all $s$, and
  $<L_s(f_j),L_s(f_k)>=0$ for all $j \le p$ and $k>p$.
\end{lemma}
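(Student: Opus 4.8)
The plan is to produce the family $\{f_j^s\}$ by starting with an arbitrary smooth frame of $K$ complementary to the $e_i^s$, applying a fibrewise Gram--Schmidt--type orthogonalization so that the last $m-p$ vectors land in $K_s^\perp\cap K$, and then doing a linear-algebra diagonalization on the remaining $p$ vectors to kill the cross pairings $\langle L_s(f_j),L_s(f_k)\rangle$ for $j\le p<k$.

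First I would fix, once and for all, a smooth family of sections $\{b_j^s\}_{j=1}^m\subset K$ depending smoothly on $s\in[-1,1]$ such that $\{e_1^s,\ldots,e_4^s,b_1^s,\ldots,b_m^s\}$ is a basis of $K$ for each $s$; this exists since $m+4=\dim K$ and the $e_i^s$ vary smoothly and remain independent. Projecting the $b_j^s$ off the span of $e_1^s,\ldots,e_4^s$ in the $L^2$ inner product gives a smooth family still spanning a complement, and I may assume the $b_j^s$ are $L^2$-orthogonal to all $e_i^s$. Now consider the subspace $E_s=\mathrm{span}\{e_1^s,\ldots,e_4^s\}\subset K_s$. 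By hypothesis $\dim K_s\le p+4$, so $K_s\cap E_s^\perp$ (intersection inside $K$) has dimension at most $p$; equivalently the restriction $L_s|_{E_s^\perp\cap K}$ has kernel of dimension $\le p$, hence rank $\ge m-p$. I would choose $\{f_j^s\}_{j=p+1}^m$ to be a smooth orthonormal frame of $\ker(L_s|_{E_s^\perp\cap K})^\perp$ inside $E_s^\perp\cap K$ — wait, that is the wrong space. Rather: set $W_s=\ker(L_s)\cap E_s^\perp\cap K=K_s\cap E_s^\perp$, of dimension $\le p$; extend it to a smooth family by a complement and let $\{f_j^s\}_{j=1}^{p}$ span a smoothly varying complement chosen so that $\{f_j^s\}_{j=1}^p$ together with a smooth orthonormal frame $\{f_j^s\}_{j=p+1}^m$ of $(W_s\oplus\mathrm{span}\{f_1^s,\ldots,f_p^s\})^{\perp}\cap E_s^\perp\cap K$ exhausts $E_s^\perp\cap K$. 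By construction $L_s(f_j^s)=0$ for the indices where $f_j^s\in W_s$, but to get \emph{all} of $\{f_j^s\}_{j=p+1}^m$ into $K_s^\perp\cap K$ I instead take the $f_j^s$, $j>p$, to be a smooth orthonormal frame of a fixed-dimension smooth subbundle complementary to $K_s$ inside $E_s^\perp\cap K$; since $\dim(E_s^\perp\cap K)=m$ and $\dim(K_s\cap E_s^\perp)\le p$, such an $(m-p)$-dimensional complement exists and can be chosen smoothly (e.g. as the $L^2$-orthocomplement of $K_s\cap E_s^\perp$ inside $E_s^\perp\cap K$, whose dimension is $m-\dim(K_s\cap E_s^\perp)\ge m-p$, then truncated smoothly to dimension $m-p$; the dimension jumps of $K_s\cap E_s^\perp$ only make this complement bigger, so one extracts a smooth $(m-p)$-frame).

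With $\{f_j^s\}_{j=p+1}^m$ fixed in $K_s^\perp\cap K$ and $L^2$-orthogonal to the $e_i^s$, I would finish by arranging the cross-term condition $\langle L_s(f_j^s),L_s(f_k^s)\rangle=0$ for $j\le p<k$. Let $f_1^s,\ldots,f_p^s$ be any smooth frame of the remaining complement in $E_s^\perp\cap K$, $L^2$-orthogonal to the $f_k^s$ with $k>p$ and to the $e_i^s$. The vectors $L_s(f_k^s)\in C$, $k>p$, span a subspace $C_s'\subset C$ of dimension $m-p$ (they are linearly independent because no nontrivial combination lies in $K_s$). Replace each $f_j^s$, $j\le p$, by $f_j^s-\sum_{k>p}\lambda_{jk}(s)f_k^s$ where the coefficients $\lambda_{jk}(s)$ are the unique solution of the linear system expressing that the new $L_s(f_j^s)$ is $L^2$-orthogonal to $C_s'$; since the Gram matrix of $\{L_s(f_k^s)\}_{k>p}$ is invertible and varies smoothly in $s$, the $\lambda_{jk}(s)$ are smooth, and the corrected $f_j^s$ still lie in $E_s^\perp\cap K$, remain $L^2$-orthogonal to the $e_i^s$ (the correction is a combination of vectors already orthogonal to them), and together with the $f_k^s$, $k>p$, still form a basis of $E_s^\perp\cap K$. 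This yields $\langle L_s(f_j^s),L_s(f_k^s)\rangle=0$ for all $j\le p<k$, as required. Note the hypothesis involving $p\le m$ is used only to guarantee $m-p\ge 0$, and the bound $\dim K_s\le p+4$ is exactly what gives $\dim(K_s\cap E_s^\perp)\le p$, the numerical input to the two steps.

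The main obstacle is the smoothness in $s$: the subspaces $K_s=\ker(L_s)$ can jump in dimension as $s$ varies (indeed that is the whole point — at $s=0$ the kernel is larger), so $K_s^\perp\cap K$ is \emph{not} a smooth subbundle of the trivial bundle $[-1,1]\times K$ in general. The remedy, which I would spell out, is that we do not need $K_s^\perp\cap K$ itself to be smooth: we only need a smooth $(m-p)$-dimensional subbundle that happens to lie inside $K_s^\perp\cap K$ for every $s$, and the dimension hypothesis $\dim(K_s\cap E_s^\perp)\le p$ guarantees that $K_s^\perp\cap K\cap E_s^\perp$ always has dimension $\ge m-p$, so a generic smooth $(m-p)$-frame of $E_s^\perp\cap K$ (a genuine smooth bundle of rank $m$) can be perturbed into one of these varying subspaces; this is a standard transversality/general-position argument for sections of Grassmann bundles. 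Once that smooth frame is in hand, every subsequent step is an honest smooth linear-algebra construction over the contractible base $[-1,1]$, so no further obstruction arises.
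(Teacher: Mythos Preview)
Your overall strategy matches the paper's: first build the $(m-p)$-frame $\{f_j^s\}_{j>p}$ inside $K_s^\perp\cap K$, then fill in $\{f_j^s\}_{j\le p}$ as a complementary frame orthogonal to the $e_i^s$, and finally correct the $f_j^s$ with $j\le p$ by combinations of the $f_k^s$ with $k>p$ to kill the cross-pairings $\langle L_s(f_j),L_s(f_k)\rangle$. That last step is exactly the ``Gram--Schmidt'' the paper invokes, and your justification that the Gram matrix of $\{L_s(f_k^s)\}_{k>p}$ is invertible (because $f_k^s\in K_s^\perp$ forces any kernel combination to vanish) is correct.

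The gap is in the step you yourself flag as the main obstacle: producing a \emph{smooth} $(m-p)$-frame inside the jumping family $K_s^\perp\cap K$. You appeal to a ``standard transversality/general-position argument for sections of Grassmann bundles'', but this is the wrong tool. Transversality lets you perturb a section to be \emph{transverse to} a subvariety, not to \emph{lie inside} one. Concretely, inside $\mathrm{Gr}(m-p,\,E_s^\perp\cap K)$ the sub-Grassmannian of $(m-p)$-planes contained in $K_s^\perp\cap K$ has codimension $(m-p)\bigl(m-\dim(K_s^\perp\cap K)\bigr)$, which at a generic $s$ (where $\dim(K_s^\perp\cap K)=m-p$) equals $(m-p)p>0$. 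A generic section of the Grassmann bundle over the one-dimensional base will therefore miss this locus at all but finitely many $s$, the opposite of what you want. Equivalently, working with the adjoint $\Lambda_s^*$ of $L_s|_{E_s^\perp\cap K}$, the bad set of $(m-p)$-tuples in $C$ whose images under $\Lambda_s^*$ are dependent has codimension only $\ge 1$ when $\mathrm{rank}\,\Lambda_s^*=m-p$, so genericity over $[-1,1]$ does not avoid it.

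The paper handles this by a stratification-plus-extension argument rather than transversality: it filters $[-1,1]$ by the closed sets $S_r=\{s:\dim K_s\ge r\}$, notes that over each stratum $S_r\setminus S_{r+1}$ the spaces $K_s^\perp\cap K$ form an honest bundle, builds the $(m-p)$-frame on the deepest stratum $S_{p+4}$ (where $\dim(K_s^\perp\cap K)=m-p$ exactly), and then extends inductively to $S_{r-1}$ using the Tietze extension theorem, exploiting that on the larger strata the target subspace only gets bigger. This is the mechanism you are missing; once you replace the transversality claim with this inductive extension, the rest of your argument goes through.
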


\begin{proof}
  Define $S_{p+4} \subset S_{p+3} \subset ... \subset S_4 = [-1,1]$
  where $S_r = \{s \in [-1,1] | \mathrm{dim}(K_s) \ge r\}$. Restricted
  to $S_{p+4}$ the vector spaces $K_s^{\perp} \cap K$ form a smooth
  vector bundle and so admit a smooth frame of $m-p$ sections
  $\{f_j^s\}_{j=p+1}^m$. But suppose that we have defined an
  $m-p$--frame (that is, $m-p$ linearly independent sections) over
  $S_r$. The vector spaces $K_s^{\perp} \cap K$ also form a continuous
  vector bundle over $S_{r-1} \setminus S_r$ which extend to a
  continuous vector bundle over $S_r$. Thus by the Tietze extension
  theorem the sections $\{f_j^s\}_{j=p+1}^m$ extend to $S_{r-1}$ and
  we can conclude by induction to define these sections over
  $[-1,1]$. The other sections $f_j^s$ can then be constructed
  smoothly over $s$ since the orthogonal complement of the sections
  already constructed forms a vector bundle over $[-1,1]$ which
  therefore admits a smooth frame field. We can arrange that their
  images are othogonal by a Gram-Schmidt procedure.
\end{proof}

\begin{lemma}\label{lem:3}
  Let $\hat K\subset K$ and $\hat C\subset C$ be $p$--dimensional
  vector spaces. Then the map 
  \begin{eqnarray*}
    \hat F:V^4\rightarrow Hom(\hat K,\hat C),\qquad
    \hat F(g_1,\ldots g_4)=\pi_{\hat C}\circ Y|_{\hat K},
  \end{eqnarray*}
  where $Y:N\rightarrow \Lambda^{0,1}(T^\ast S\otimes_\setC N)$ is
  the homomorphism associated to the $g_i$, is nonzero.
\end{lemma}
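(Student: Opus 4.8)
The plan is to argue by contradiction: suppose $\hat F$ is identically zero, i.e. for every choice of $g_1,\ldots,g_4\in V$ the associated homomorphism $Y$ satisfies $\pi_{\hat C}\circ Y|_{\hat K}=0$. The point is that the $g_i$ range over an infinite-dimensional space $V$ of sections supported in $U$, so the map $g\mapsto Y$ is extremely rich, and the only way to kill the composition with $\pi_{\hat C}$ for all of them would force $\hat C$ to be orthogonal to an unreasonably large space of $(0,1)$-forms. First I would unwind the definition: recall from the discussion preceding the lemma that $Y$ is determined by $Y(e_i^s)=g_i^s$ on the frame $\{e_i^s\}_{i=1}^4$ over $\overline U$; equivalently, writing a section $\kappa\in\hat K\subset K$ in terms of this frame over $U$ as $\kappa(z)=\sum_i a_i(z)e_i^s(z)$, we get $Y\kappa=\sum_i a_i(z)g_i^s(z)$ pointwise on $U$ (and $Y$ can be taken to vanish outside $U$). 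So the hypothesis $\pi_{\hat C}\circ Y|_{\hat K}=0$ says $\langle \sum_i a_i g_i,\eta\rangle_{L^2}=0$ for every $\eta\in\hat C$ and every admissible tuple $(g_1,\ldots,g_4)\in V^4$.

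The key step is then a pointwise/local argument. Pick any nonzero $\kappa\in\hat K$; since $\kappa\in K=\ker D$ and $D$ is a Cauchy--Riemann operator, $\kappa$ cannot vanish on an open set (it is "holomorphic" in the generalized sense of \cite{mcduff2} and so its zeros are isolated), so there is a point $z_0\in U$ with $\kappa(z_0)\neq 0$, hence some coefficient, say $a_1(z_0)\neq 0$. Now fix any $\eta\in\hat C$ with $\eta\not\equiv 0$; since $\eta$ also lies in $\coker D$ it is likewise a solution of an elliptic equation and cannot vanish identically on $U$, so choose $z_0$ above to moreover satisfy $\eta(z_0)\neq 0$ (possible because both $\kappa$ and $\eta$ have isolated zeros). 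Take $g_2=g_3=g_4=0$ and choose $g_1\in V$ to be a bump section supported in a small neighborhood of $z_0$ inside $U$, scaled so that its value near $z_0$ is a large multiple of the specific $(0,1)$-form needed to make $\langle a_1(z)g_1(z),\eta(z)\rangle$ pointwise positive there — concretely, use that the pairing $\Lambda^{0,1}T^\ast S^2\otimes_\setC N\times\overline{\Lambda^{0,1}T^\ast S^2\otimes_\setC N}\to\setR$ is nondegenerate, so there exists a value $v$ for $g_1(z_0)$ with $\langle a_1(z_0)v,\eta(z_0)\rangle\neq 0$, and spread it by a cutoff. For a sufficiently concentrated bump the $L^2$ pairing $\langle Y\kappa,\eta\rangle$ is then nonzero, contradicting $\pi_{\hat C}\circ Y|_{\hat K}=0$. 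Hence $\hat F$ is not identically zero.

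The main obstacle I anticipate is the justification that $\kappa$ and $\eta$ do not vanish on a common open subset of $U$ — this is what makes the bump-function construction possible. For $\kappa\in\ker D$ this is the similarity-principle/unique-continuation property of solutions of real-linear Cauchy--Riemann equations (Carleman estimates, or the fact that such $\kappa$ locally look like $e^{\phi}\cdot(\text{honestly holomorphic})$), which gives isolated zeros. For $\eta\in\coker D$, one identifies $\coker D$ with the kernel of the formal adjoint $D^\ast$, which is again a first-order elliptic operator of Cauchy--Riemann type on the conjugate bundle, so the same unique continuation applies; alternatively, $\eta$ is a $J_0$-antiholomorphic-type section and has isolated zeros by the analogous argument. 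Once isolated zeros are in hand, choosing $z_0$ in $U$ avoiding both zero sets is immediate, and the rest is the routine bump-function estimate. A minor point to be careful about is that $g_1$ must land in $V$, i.e. be smooth and supported in $U$; since $z_0\in U$ is interior, any sufficiently small bump works. I would also note that the statement only claims $\hat F\neq 0$, not surjectivity, so we need just one tuple making the pairing with one $\eta$ nonzero, which is exactly what the above produces.
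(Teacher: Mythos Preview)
Your argument has a genuine gap: you have overlooked that $V$ is not the space of all smooth sections supported in $U$, but rather (see the paragraph before Lemma~\ref{lem:1}) the subspace of such sections that also lie in $C^\perp$. Your bump section $g_1$ is constructed so that the pointwise pairing $\langle g_1(z),\eta(z)\rangle$ is positive near $z_0$; integrating, this forces $\langle g_1,\eta\rangle_{L^2}>0$, so $g_1\notin C^\perp$ and hence $g_1\notin V$. In other words, the very property you need from $g_1$ is incompatible with membership in $V$.

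This is not a cosmetic issue. If $\kappa=\sum_i a_i e_i^s$ with the $a_i$ \emph{constant} on $U$, then $Y\kappa=\sum_i a_i g_i$ is a constant-coefficient combination of elements of $V\subset C^\perp$, so $\pi_{\hat C}(Y\kappa)=0$ for every choice of $(g_1,\ldots,g_4)\in V^4$; in particular the lemma is false if $\hat K\subset\mathrm{span}\{e_i^s\}$. Your proof never uses that $\kappa$ is independent of the $e_i^s$, so it would prove too much. What is really needed is to show that $\langle g,a_i\eta\rangle=0$ for all $g\in V$ forces $(a_i\eta)|_U$ to lie in $C|_U$, and then to argue that this makes the $a_i$ constant. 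The paper does exactly this: from $(a_{ij}^s\eta_k)\perp V$ it extracts the equation $D^\ast(a_{ij}^s\eta_k)=0$ on $U$, combines it with $\sum_i(\bar\partial a_{ij}^s)e_i^s=0$ (coming from $Df_j^s=0$ and $De_i^s=0$), concludes the $a_{ij}^s$ are constant on $U$, and then invokes unique continuation to contradict the linear independence of $f_j^s$ from the $e_i^s$. Your unique-continuation instincts are the right ones, but they must be applied to the coefficient functions $a_i$ and the adjoint equation, not to a direct bump construction.
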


\begin{proof}
  Let $a_{ij}$ be such that $f_j^s=\sum_{i=1}^4 a_{ij}^se_{i}^s$ on
  $U$. Then $\hat F(g_1,\ldots g_4)(f_j^s)=\sum_{i=1}^4 g_i^s
  a_{ij}^s$ on $U$.

  If $\hat F$ was trivial, then the $L^2$ inner product
  \begin{eqnarray*}
    \sum_{i=1}^4\langle g_i^s\cdot a_{ij}^s,\eta_k\rangle=0\quad
    \forall\,k,j\quad\forall g_i^s\in V
  \end{eqnarray*}
  Thus $(a_{ij}^s\eta_k)_{i=1}^4\in (V^4)^\perp$ for all $j,k$.  Recall
  that $K$ and $C$ are the kernel and cokernel of a real--linear
  Cauchy--Riemann operator $D$, respectively. Then for fixed $j,k$, if
  $a_{ij}^s\eta_k\in V^\perp$ for all $i$, we have over $U$
  \begin{eqnarray*}
    D^\ast (a_{ij}^s\eta_k)=(\delbar^\ast a_{ij}^s)\otimes
    \eta_k=0\quad
    \forall\,i.
  \end{eqnarray*}
  On the other hand 
  \begin{eqnarray*}
    0=D\,f_j=\sum_{i=1}^4 D(a_{ij}^se_i^s)=\sum_{i=1}^4 (\delbar
    a_{ij}^s)\otimes e_i^s
  \end{eqnarray*}
  and the two equations taken together imply that the
  $\{a_{ij}^s\}_{i=1}^4$ are constant functions over $U$. This means
  that $f_j^s$ is a linear combination of the $e_i^s$ on $U$. By
  unique continuation, using that $f_i^s$ and $e_i$ are in the kernel
  of the operator $D$, we conclude that $f_j^s$ is globally a linear
  combination of the $e_i$.  But that contradicts that $f_j^s$ is
  linearly independent of the $e_i^s$.
\end{proof}

\begin{lemma}\label{lem:2}
  Suppose that $Y_s$ and corresponding sections $f_j$ are given as in
  Lemma \ref{lem:1}. Then there exist a smooth family $(g^s_i)_{i=1}^4
  \in V^4$ such that for each of the corresponding maps $L_s$ there
  exists an $f^s \in \mathrm{span}\{f_j^s\}_{j=1}^p$ with $L_s(f^s)\ne
  0$ and independent of the span of the $L_s(f_j)$ for all $j>p$.
\end{lemma}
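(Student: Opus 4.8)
The plan is to construct the family pointwise in $s$ --- separating the $s$ at which a genuine correction to the given $(g_i^s)$ is needed from those at which it is not --- and then to patch the pointwise choices into one smooth family by re--running the stratified argument from the proof of Lemma~\ref{lem:1}. For a perturbation $(h_i)\in V^4$ of the given family, let $Y'$ be the corresponding homomorphism and $L'=\pi_C\circ Y'$; since $\{e_i^s\}_{i=1}^4$ is a real frame for $N$ over $U$ and the $h_i$ are supported in $U$,
\[
  L'(f_j^s)=L_s(f_j^s)+\pi_C\left(\sum_i a_{ij}^s\,h_i\right),
\]
where $f_j^s=\sum_i a_{ij}^se_i^s$ on $U$ and the $a_{ij}^s$ are the coefficients from the proof of Lemma~\ref{lem:3}. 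Put $\hat K^s=\mathrm{span}\{f_j^s\}_{j=1}^p\subset K$, $W_s=\mathrm{span}\{L_s(f_j^s)\}_{j=p+1}^m\subset C$, and let $W_s'$ denote the analogue for $L'$; we must arrange $L'(\hat K^s)\not\subset W_s'$ for every $s$.

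If $L_s|_{\hat K^s}\neq 0$, no correction is needed at $s$: choosing $f^s\in\hat K^s$ with $L_s(f^s)\neq0$, Lemma~\ref{lem:1} gives $\langle L_s(f_j),L_s(f_k)\rangle=0$ for $j\le p<k$, so $L_s(f^s)\perp W_s$, whence $L_s(f^s)\notin W_s$ and $h_i=0$ works. Suppose instead $L_s(\hat K^s)=0$, so $\hat K^s\subset K_s$. Then $K_s\supseteq\mathrm{span}\{e_i^s\}\oplus\hat K^s$, which is $(p+4)$--dimensional, and $\dim K_s\le p+4$, so equality holds; hence $\{f_j^s\}_{j>p}$ spans a complement of $K_s$ in $K$ and, $L_s$ being injective there, $\dim W_s=m-p$. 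Since the $m+4$ sections $e_1^s,\dots,e_4^s,f_1^s,\dots,f_m^s\in K$ are linearly independent we have $\dim K\ge m+4$, and as $D$ has Fredholm index $4$ (Riemann--Roch, $N$ trivial of rank two) $\dim C\ge m$; thus $\dim W_s^\perp=\dim C-(m-p)\ge p$ and we may pick a $p$--dimensional $\hat C\subseteq W_s^\perp$. Lemma~\ref{lem:3} with $\hat K=\hat K^s$ and this $\hat C$ shows that $(h_i)\mapsto\pi_{\hat C}\circ Y'|_{\hat K^s}$ is nonzero, so some arbitrarily small $(h_i)\in V^4$ yields $f^s\in\hat K^s$ with $\pi_{\hat C}(L'(f^s))\neq0$, hence $L'(f^s)\neq0$; since $\hat C\perp W_s$ and, for $(h_i)$ small, $W_s'$ is close to $W_s$, we also get $L'(f^s)\notin W_s'$.

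It remains to assemble these choices smoothly in $s$. Stratify $[-1,1]$ by the upper--semicontinuous functions $s\mapsto\dim K_s$ and $s\mapsto\dim W_s$ into finitely many locally closed strata; over each stratum $\hat K^s$ and $W_s$ vary continuously and the condition $L'(\hat K^s)\not\subset W_s'$ is open in $((h_i),s)$, while the two cases above exhibit a point of the good set over every $s$. Then, exactly as in the proof of Lemma~\ref{lem:1}, one chooses a section of the good set smoothly over the smallest stratum, extends it outward stratum by stratum via the Tietze extension theorem, and finally rescales the whole family to a sufficiently small multiple of itself so that the open conditions already secured on the smaller strata are preserved. The resulting smooth family $(h_i^s)$, added to the given one, is the desired $(g_i^s)$.

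The main obstacle is this last step. Because $\dim K_s$ --- hence also whether the given operator already works at $s$ or a correction is genuinely needed --- can jump as $s$ varies, the pointwise choices of the preceding two paragraphs cannot simply be glued; the delicate point is to extend the correction from stratum to stratum while keeping all of the finitely many open conditions satisfied simultaneously, which is exactly what the stratified Tietze argument borrowed from Lemma~\ref{lem:1} accomplishes.
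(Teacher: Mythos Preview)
Your overall strategy---apply Lemma~\ref{lem:3} pointwise and then patch using the stratification idea from Lemma~\ref{lem:1}---is the paper's strategy as well, but you have introduced complications that obscure the argument and leave a gap in the final step.

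The case analysis on whether $L_s|_{\hat K^s}$ vanishes is unnecessary. The paper applies Lemma~\ref{lem:3} uniformly: for every $s$, take $\hat K=\hat K^s$ and $\hat C$ a $p$-dimensional subspace of $C$ orthogonal to $W_s=\mathrm{span}\{L_s(f_j)\}_{j>p}$ (you checked $\dim C\ge m$, so such a $\hat C$ exists; in fact $\dim C=m$ and $\hat C=W_s^\perp$), and conclude that the linear map $\hat F_s:V^4\to\mathrm{Hom}(\hat K^s,\hat C)$ is nonzero for all $s$. This single statement replaces both of your cases.

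More seriously, your gluing step does not go through as written. The Tietze argument in Lemma~\ref{lem:1} constructs frames of a family of \emph{linear subspaces} of varying rank; it does not produce sections of an arbitrary open ``good set''. Your proposed fix---``rescale the whole family to a sufficiently small multiple''---fails: over the stratum where $L_s|_{\hat K^s}=0$ a genuinely nonzero correction $(h_i)$ is required, and rescaling toward zero will eventually leave the good set there, while not rescaling may violate the open condition elsewhere. The paper sidesteps this by keeping the problem linear: once $\hat F_s$ is known to be nonzero for every $s$, one seeks a continuous section of the family $s\mapsto(M^s)^\perp$, where $M^s=\ker\hat F_s\subset V^4$. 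These orthogonal complements are linear subspaces whose rank is lower semicontinuous in $s$, so the stratified Tietze construction from Lemma~\ref{lem:1} applies verbatim to produce a nowhere-zero section $(g_i^s)\in(M^s)^\perp$; any such section automatically has $\hat F_s(g_i^s)\neq 0$, which is exactly the conclusion required.
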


\begin{proof}
  As in the previous lemma we express the $\{f_j^s\}_{j=1}^p$ as a
  linear combination of functional multiples of the
  $\{e_i^s\}_{i=1}^4$, at least over the open set $U$.  Then we
  redefine the above linear map $F$ as in Equation (\ref{eq:F}) such
  that its range is paths of $p \times p$ matrices with entries
  determined by the $L^2$ inner products of the $\{f_j^s\}_{j=1}^p$
  with $\{\eta_j^s\}_{j=1}^p$ orthogonal to the $L_s(f_j)$ for all
  $j>p$.  This linear map is nonzero for all $s$ by Lemma
  \ref{lem:3}. Therefore it has a positive codimensional kernel $M^s
  \subset V^4$ for all $s$ and the lemma follows if we can find a
  continuous section of $M^{\perp}$ over $[0,1]$. But the rank of
  these vector spaces is again lower semicontinuous in $s$ and so does
  indeed admit a section as above by first defining over points of
  minimal rank and then extending as before in the proof of Lemma
  \ref{lem:1}.
\end{proof}

\begin{proof}[Proof of Lemma \ref{lem:L-Y}]
  We prove this by induction by perturbing $g^s_i$.  Suppose that we
  have found $g_i^s$ such that $\mathrm{dim}\ker(K_s) \le p+4$ for all
  $s$ and some $0<p\le m$. Then we can apply Lemma \ref{lem:1} to find
  corresponding families of sections and thus a perturbation of the
  $g_i^s$ using Lemma \ref{lem:2}. If the $g_i^s$ are chosen
  sufficiently small then the $L_s(f_j)$ are still linearly
  independent for all $s$ and $j=p+1,..,m$. However for each $s$ there
  is now an $f^s \in \mathrm{span}(\{f_j^s\}_{j=1}^p)$ with
  $L_s(f^s)\ne 0$ and independent to the $L_s(f_j)$ for
  $j>p$. Hence for each $s$ we have that $\mathrm{dim}(K_s) \le p+4-1$
  and the proof follows.
\end{proof}

Let $Y_s$ and $L_s$ be as in Lemma \ref{lem:L-Y} and consider the family
of real--linear Cauchy--Riemann operators
\begin{eqnarray}\label{eq:D_st}
  D_{s,t}=D+t\,Y_s.
\end{eqnarray}

The kernel of $D_{s,t}$ gets arbitrarily close to $K_s$ as $t$ gets
small as described below. This result is well established in the
literature (see e.g. \cite{kato}), but we give a proof here for the
convenience of the reader.  In the following $\norm{\cdot}$ denotes
the $L^2$--norm.
\begin{lemma}\label{lem:D_st}
  There exists a constant $c>0$ so that for all $0<|t|<1/2c$, $s\in
  [-1,1]$ and $v_s\in K_s$, $D_{s,t}$ is surjective and there exists a
  unique $\xi_{s,t}(v_s)\in K_s^\perp$ so that
  \begin{eqnarray*}
    v_s+\xi_{s,t}(v_s)\in\ker D_{s,t}.
  \end{eqnarray*}
  Moreover $\norm{\xi_{s,t}(v_s)}\le 2tC\norm{v_s}$.
\end{lemma}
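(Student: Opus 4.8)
The plan is to treat $D_{s,t} = D + tY_s$ as a perturbation of the operator $D + 0\cdot Y_s$ whose kernel on the relevant subspace is $K_s$, and to run the standard implicit-function/Banach-fixed-point argument uniformly in $s$ and $t$. First I would set up the Fredholm framework: work with Sobolev completions $W^{1,2}(N)$ and $L^2(\Omega^{0,1}(N))$ so that $D$ and each $Y_s$ (a bundle homomorphism, hence a bounded zeroth-order operator) are bounded, and fix the $L^2$-orthogonal splittings $\Gamma(N) = K \oplus K^\perp$ and $\Omega^{0,1}(N) = C \oplus C^\perp$. Since $s$ ranges over the compact interval $[-1,1]$ and the family $Y_s$ is continuous (indeed smooth) in $s$, the operator norms $\norm{Y_s}$ are uniformly bounded, say by some constant; this will feed the uniform constant $c$ in the statement.

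The core estimate is a uniform lower bound for $D_{s,t}$ on $K_s^\perp$. The point is that by Lemma \ref{lem:L-Y} we have $K_s = \ker(L_s)\cap K = \ker(\pi_C\circ Y_s)\cap K$, and $L_s = \pi_C\circ Y_s$ is surjective onto $C$. I would argue that the operator $\widetilde{D}_s := \pi_{C^\perp}\circ D + \pi_C\circ Y_s$ (or rather the relevant combination appearing when one decomposes $D_{s,t}$ along the splittings) is, for $t$ in a bounded range, an isomorphism from $K_s^\perp$ onto its image with inverse bounded uniformly in $s$. Concretely: $D$ restricted to $K^\perp$ is the isomorphism $\widetilde D : K^\perp \to C^\perp$ with bounded inverse $P$, and $L_s$ maps $K$ onto $C$; on $K_s^\perp\cap K$ the map $L_s$ is injective (its kernel in $K$ is exactly $K_s$), so it has a left inverse, bounded uniformly in $s$ by compactness of $[-1,1]$. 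Combining these two pieces gives a constant $C$ such that, writing any $w\in K_s^\perp$ as $w = w_K + w_{K^\perp}$ with $w_K\in K_s^\perp\cap K$ and $w_{K^\perp}\in K^\perp$, one has $\norm{w}\le C(\norm{\pi_{C^\perp}D_{s,t}w} + \norm{\pi_C D_{s,t}w})$ for $|t|$ small, since $\pi_{C^\perp}D_{s,t}w = \widetilde D w_{K^\perp} + t\,\pi_{C^\perp}Y_s w$ controls $w_{K^\perp}$ and then $\pi_C D_{s,t}w = t L_s w_K + (\text{terms already controlled})$ controls $w_K$ once we divide by $t$ — this is where the upper bound $|t|<1/2c$ and the factor $2tC$ in the conclusion come from.

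Given this uniform estimate, the rest is routine. For $v_s\in K_s$, I seek $\xi\in K_s^\perp$ with $D_{s,t}(v_s+\xi)=0$, i.e. $D_{s,t}\xi = -D_{s,t}v_s = -tY_s v_s$ (using $Dv_s=0$ since $v_s\in K\subset\ker D$, and $\pi_C Y_s v_s = L_s v_s = 0$ since $v_s\in K_s=\ker L_s$, so in fact $-tY_s v_s\in C^\perp$ up to lower order). Solvability on $K_s^\perp$ with the target $C^\perp$ reduces, via the uniform isomorphism above and a contraction-mapping argument absorbing the $t\pi_{C^\perp}Y_s$ term for $|t|$ small, to a unique solution $\xi_{s,t}(v_s)$, and the a priori bound directly yields $\norm{\xi_{s,t}(v_s)}\le 2tC\norm{v_s}$. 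Surjectivity of $D_{s,t}$ follows because $\dim\ker D_{s,t}\ge \dim K_s$ while the Fredholm index is unchanged under the compact-in-the-appropriate-sense perturbation $tY_s$ (the index equals that of $D$), and one checks the cokernel dimension drops to zero — alternatively, surjectivity onto $C^\perp$ from the estimate plus the fact that $L_s$ hits all of $C$ shows $D_{s,t}$ hits everything.

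The main obstacle I anticipate is making the lower bound on $D_{s,t}|_{K_s^\perp}$ genuinely \emph{uniform} in $s$: the subspaces $K_s$ vary with $s$ (they are $\mathrm{span}\{e_1,e_2,e_3,s e_4+(1-s)e_5\}$), so one must check that the projections $\pi_{K_s}$, $\pi_{K_s^\perp}$, and the left inverse of $L_s|_K$ all depend continuously — hence boundedly, by compactness of $[-1,1]$ — on $s$. This is true because $e_4^s = s e_4 + (1-s)e_5$ is a smooth nowhere-vanishing-modulo-the-others family (the $e_i^s$ stay linearly independent over $p_0$, and more importantly $s e_4+(1-s)e_5\in K$ is never in $\mathrm{span}\{e_1,e_2,e_3\}$ since $e_5$ is $L^2$-orthogonal to $e_1,e_2,e_3$ and $e_4$ has unit norm), but spelling this out cleanly is the one point requiring care. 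Everything else — boundedness of $Y_s$, the contraction estimate, the index count — is standard elliptic perturbation theory as in \cite{kato}.
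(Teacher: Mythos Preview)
Your approach is correct and is essentially the paper's own argument in abstract dress: the paper constructs the correction $\xi_{s,t}(v)$ explicitly as a Neumann series $\sum_{n\ge 1} t^n F_s^n(v)$ for a carefully defined iteration operator $F_s(\zeta)=\tilde L_s^{-1}L_s(PY_s\zeta)-PY_s\zeta$ on $\ker L_s$, while you package the same computation as a contraction-mapping/implicit-function step; both rest on exactly the ingredients you name, namely the inverse $P$ of $D|_{K^\perp}$ and the uniformly bounded inverse $\tilde L_s^{-1}$ of $L_s|_{V_s}$. For surjectivity the paper runs a Hahn--Banach annihilator argument rather than your index count, but once you have injectivity of $D_{s,t}$ on $K_s^\perp$ (hence $\dim\ker D_{s,t}\le\dim K_s=4=\operatorname{index}D$), your route is equally valid.
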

\begin{proof}
  Let $V_s=K_s^\perp\cap K$ denote the orthogonal complement of
  $K_s$ in $K$ and set
  \begin{eqnarray*}
    \tilde L_s=L_s|_{V_s}:V_s\rightarrow C.
  \end{eqnarray*}
  Then $\tilde L_s$ is an isomorphism.

  Let $W_s=K_s^\perp\cap \ker L_s\subset L^2(N)$ and consider the
  compact operator
  \begin{eqnarray*}
    F_s:\ker L_s\rightarrow W_s,\qquad
    F_s(\zeta)=\tilde L_s^{-1}\circ L_s(PY_s(\zeta))-PY_s(\zeta),
  \end{eqnarray*}
  where $P:C^\perp\rightarrow K^\perp$ is the inverse of
  $D|_{K^\perp}:K^\perp\rightarrow C^\perp$. 

  Note that $L_s\circ F_s=0$ and $\tilde L^{-1}$ and $P$ have
  image in $K_s^\perp$, so $F_s$ is well defined. Let
  $c=\sup_{s\in[-1,1]}\norm{F_s}_{L^2}$. For $|t|c\le \frac12$ and
  $v\in \ker L_s$ note that
  \begin{eqnarray*}
    \norm{\sum_{n=1}^N t^n F_s^n(v)}
    \le\sum_{n=1}^N |t|^n \norm{F_s^n(v)}
    \le\sum_{n=1}^N |t|^nc^n\norm{v}
    < 2|t|c\norm{v}
  \end{eqnarray*}
  so we may define
  \begin{eqnarray*}
    \xi_{s,t}(v)=\sum_{n=1}^\infty t^n F_s^n(v),
  \end{eqnarray*}
  also satisfying $\norm{\xi_{s,t}(v)}\le 2tc\norm{v}$. Moreover,
  \begin{eqnarray*}
    D_{s,t}\circ F_s=D\circ F_s+tY_s\circ F_s=-Y_s+tY_s\circ F_s
  \end{eqnarray*}
  and thus
  \begin{eqnarray*}
    D_{s,t}\left(v+\sum_{n=1}^N t^nF_s^n(v)\right)
    &=&Dv+tY_s(v)+\sum_{n=1}^N\left(
      t^{n+1}Y_sF_s^n(v)-t^nY_sF_s^{n-1}(v)\right)\\
    &=&Dv+t^{N+1}Y_sF_s^N(v),
  \end{eqnarray*}
  which converges strongly to $Dv$ (in $L^2$) as $N\rightarrow\infty$, so
  \begin{eqnarray*}
    D_{s,t}(v+\xi_{s,t}(v))=Dv,\qquad \forall v\in \ker L_s.
  \end{eqnarray*}
  In particular $v+\xi_{s,t}(v)\in\ker D_{s,t}$ for all $v\in K_s$.

  Next we show that $D_{s,t}$ is surjective. It suffices to show that
  the image of $D_{s,t}$ is dense as $D_{s,t}$ is Fredholm and thus
  has a closed image. By Hahn--Banach, it suffices to show that there
  does not exists $0\ne\mu\in L^2(\Lambda^{0,1}(T^\ast
  S^2\otimes_\setC N))$ that annihilates
  the image of $D_{s,t}$. Suppose to the contrary such a $\mu$
  exists. Write $\mu=\mu_0+\mu_1$, where $\mu_0\in C$ and $\mu_1\in
  C^\perp$. Without loss of generality assume that $\mu_1\ne0$,
  otherwise, for $\zeta=\tilde L_s^{-1}(\mu_0)\in V_s$,
  \begin{eqnarray*}
    \langle D_{s,t}\zeta,\mu\rangle
    =\langle tY_s(\tilde L_s^{-1}(\mu_0)),\mu_0\rangle
    =t\langle \tilde L_s(\tilde L_s^{-1}(\mu_0)),\mu_0\rangle
    =t\norm{\mu_0}^2=t\norm{\mu}^2\ne 0.
  \end{eqnarray*}

  Set $\zeta=P(\mu_1)-\tilde L_s^{-1}\circ L_s\circ P(\mu_1)\in \ker
  L_s$ and consider $\zeta+\xi_{s,t}(\zeta)$.  Then
  \begin{eqnarray*}
    \langle D_{s,t}(\zeta+\xi_{s,t}(\zeta)),\mu\rangle
    =\langle D\zeta,\mu\rangle
    =\langle \mu_1,\mu\rangle=\norm{\mu_1}^2\ne 0.
  \end{eqnarray*}
  This shows that $D_{s,t}$ is surjective.

  The uniqueness of $\xi_{s,t}(v)$ satisfying $D_{s,t}(v+\xi_{s,t}(v))$
  follows from the surjectivity of $D_{s,t}$.
\end{proof}

In particular the above Lemma guarantees that for any given $\delta>0$
there exists $t_0>0$ so that for all $t<t_0$ the regular operators
$D_{s,t}$ are superregular for $s\in[\delta,1]$ and $D_{s,t}$ are not
superregular for $s\in[-1,-\delta]$. To see this note that for $s$ in
that range the quantity $e_4^s=s\,e_4+(1-s)e_5$ satisfies $\langle
e_4(0),e_4^s(0)\rangle<0$ and $\langle e_4(p_0),e_4^s(p_0)\rangle=1$
by Equation (\ref{eq:p_0}).  Thus near $p_0$ the tuple
$(e_1,e_2,e_3,e_4^s)$ forms an oriented basis of $\setR^4$ and at the
point $0$ they form a basis with the opposite orientation. In
particular there must be points in $S^2$ where the sections do not
form a basis of $\setR^4$. This remains true under small perturbations
of the tuple $(e_1,e_2,e_3,e_4^s)$.

A real--linear Cauchy Riemann operator $D$ on $N$ gives rise to an
$\setR$--invariant almost complex structure $J$ on the total space of
$N$ in the following way. Choose a local complex trivialization
$N=S^2\times \setC^2$ and write $D=\delbar_0+\frac12 Y\circ j$, where
$Y\in \mathrm{Hom}_\setR(\setC^2,\Lambda^{0,1}T^\ast
S^2\otimes\setC^2)$. Utilizing the projections to each factor $S^2$
and $\setC^2$ of $N$, referred to as the horizontal and vertical
directions with complex structures $j$ and $i$, respectively, we
define the almost complex structure $J$ at a point $x=(w,u)\in N$
acting on a vector $(h,v)\in T_xN$ via
\begin{eqnarray*}
  J(h,v)=jh+Y_{(w,u)}h+iv.
\end{eqnarray*}
Note that $J$ is independent of the trivialization chosen and indeed
satisfies $J^2=-\id$. Moreover, if $f:S^2\rightarrow N$ with
$f(z)=(w(z),u(z))\in S^2\times \setC^2$ in the homology class of a
section, then
\begin{eqnarray*}
  \delbar_J f=\frac12\left\{df+J\,df\circ j_0\right\}
  =\frac12\left\{dw+j\,dw\circ j_0\right\}
  +\frac12\left\{du+i\,du\circ j_0+Y_{(w,u)} dw\circ j_0\right\}
\end{eqnarray*}
Thus for $\delbar_Jf=0$ it is necessary that $w(z)=z$ and $j=j_0$, up
to a diffeomorphism of the domain $S^2$. In that case
\begin{eqnarray*}
   \delbar_Jf=\frac12\left\{du+i\,du\circ j_0+Y_{(w,u)}\circ j\right\}
   =D(u)
\end{eqnarray*}
so maps $f:S^2\rightarrow N$ in the class of a section are
$J$--holomorphic if and only if they can be parametrized as a section
$f(z)=(z,\xi(z))$ and $D\xi=0$. Moreover note the the zero section is
always a $J$--holomorphic section no matter what $D$ is and that the
linearization of $\delbar_J$ at the zero section is $D$.

Let $\omega$ be the canonical product symplectic form on $N$ so that
on each fiber it reduces to the Fubini-Study form and let $\tilde J$
be the canonical product complex structure on $N$ and $\tilde D$ the
associated Cauchy--Riemann operator. Given any symplectic $4$-manifold
$(M,\omega_M)$ there exists a symplectic embedding from $U$ into
$(X,\omega)=(S^2 \times M, \sigma_0 \oplus \omega_M)$ preserving the
$S^2$ factors, where $U$ is a suitable small neighborhood of the
zero-section in $N$. Thus $\tilde J$ extends to a product complex
structure on $X$ which is tamed by $\omega$, and $X$ is smoothly
foliated by regular $\tilde J$-holomorphic spheres.

Let $D_s$, $s\in[-1,2]$ be a smooth family of real--linear Cauchy
Riemann operators on $N$ so that $D_s=D_{s,t}$ for some small fixed
$t$ and $s\in[-1,1]$, where $D_{s,t}$ is the operator from Lemma
\ref{lem:D_st}, and $D_s$ interpolates between $D_1$ and $\tilde D$
for $s\in[1,2]$. Denote the associated family of almost complex
structures by $J_s$. Note that $J_s$ are tamed by $\omega$ on a
neighborhood $U$ of the zero section in $N$. We now modify the family
$J_s$ to construct a family of almost complex structures $\tilde J_s$
on $N$ with the property that $\tilde J_s=\tilde J$ outside of $U$ and
$\tilde J_s=J_s$ in an open neighborhood $V\subset U$ of the zero
section so that $\tilde J_s$ is tamed by $\omega$. Using the above
embedding we similarly construct the family $\tilde J_s$ on $X$.

The family $\tilde J_s$ is tamed by the canonical symplectic structure
on $X=S^2\times M$, and $\tilde J_2$ is the product complex
structure on $X$. Thus $\tilde J_2$ is regular (and superregular) and
$X$ is foliated by $\tilde J_2$--holomorphic spheres. By construction
$\tilde J_s$ is regular for all curves outside of $U$ and inside of
$V$. By possibly adding a small perturbation to the family $\tilde
J_s$ over $U\setminus V\subset X$ we may assume that the family
$\tilde J_s$ is a regular family of almost complex structure and that
$\tilde J_{-1}$ is regular.

Since $\tilde J_s=\tilde J$ outside of $U$, the complement of $U$ is
foliated by $\tilde J$--holomorphic spheres for all
$s\in[-1,2]$. Moreover, the zero section is $\tilde J_s$--holomorphic
for all $s\in[-1,2]$. But the linearized operator at the zero section
is $D_s$, which is not superregular for $s=-1$ by construction, so the
foliation does not persist to a $J_{-1}$--holomorphic foliation of
$X$.

This proves Theorem \ref{thm:counterexample} in the case that
$(X,\omega)=(S^2\times M^4,\sigma_0\times \omega_M)$.

\section{Stability of Foliations for Integrable Complex Structures}
\label{sec:stab-foli-integr}

In this section we show that holomorphic foliations are stable under
perturbations of complex structure so that the holomorphic bisectional
curvature (see below or e.g. \cite{kobayashi_nomizu_2}) remains
bounded.
\begin{definition}
  Let $(M,J)$ be a complex manifold. The {\em holomorphic bisectional
    curvature} $H(p,p')$ of two $J$--invariant planes $p$
  and $p'$ in $T_xM$ is
  \begin{eqnarray*}
    H(p,p')=R(X,JX,Y,JY)
  \end{eqnarray*}
  where $R$ is the regular Riemannian curvature tensor and $X$ and $Y$
  are unit vectors in $p$ and $p'$, respectively.

  We say that $(M,J)$ has holomorphic bisectional curvature bounded
  from above (below) by a constant $c$ if $H(p,p')\le c$ ($H(p,p')\ge
  c$) for all $x\in M$ and $J$--invariant planes $p,p'\subset T_xM$.
\end{definition}

The following result is a computation from page 79 of
\cite{griffiths_harris}.
\begin{lemma}\label{lem:quotient_curvature}
  Let $G\rightarrow M$ be a holomorphic vector bundle of (complex)
  rank at least 2 over a complex
  manifold $M$, and let $E\subset G$ be a holomorphic subbundle and
  $F=E^\perp$ the orthogonal complement of $E$ in $G$. Then the local
  curvature form of $F$ is greater than or equal to the curvature
  form of $G$ restricted to $F$.
\end{lemma}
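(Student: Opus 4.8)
The plan is to follow the classical second-variation computation for curvature of a holomorphic subbundle versus its orthogonal complement, exactly as in Griffiths--Harris, so I will just outline the bookkeeping. Work locally over a coordinate patch of $M$ and choose a local holomorphic frame for $G$ adapted to the splitting $G = E \oplus F$, i.e. the first set of frame vectors spans $E$ and the second spans $F$, arranged so that the Hermitian metric $h$ is the identity at a fixed point $x_0$ and its first derivatives vanish there (a normal frame for $G$ at $x_0$). Write the Chern connection matrix $\theta = h^{-1}\partial h$ in block form
\begin{eqnarray*}
  \theta = \begin{pmatrix} \theta_{EE} & \theta_{EF} \\ \theta_{FE} & \theta_{FF}\end{pmatrix},
\end{eqnarray*}
and similarly the curvature $\Theta = \bar\partial\theta$. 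The point is that $F = E^\perp$ is in general only a $C^\infty$ subbundle, so its induced Chern connection is the orthogonal projection of the ambient one; the second fundamental form of $F$ in $G$ is the off-diagonal block, which I will denote $B = \theta_{EF}$ (a $\mathrm{Hom}(F,E)$-valued $(1,0)$-form), and the induced connection on $F$ has curvature $\Theta_F$.

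The key step is the Gauss--Codazzi-type identity for a Hermitian holomorphic subbundle: the curvature of the orthogonal complement satisfies
\begin{eqnarray*}
  \Theta_F = \Theta_{FF} + B^\ast \wedge B,
\end{eqnarray*}
where $B^\ast$ is the conjugate transpose and $\Theta_{FF}$ is the $FF$-block of the ambient curvature (which is $\Theta_G|_F$ because at $x_0$ the frame is normal). Here $B^\ast \wedge B$, when paired against a tangent vector $v \in T_xM$ and contracted into a unit section $\xi$ of $F$, contributes $-\langle B(\bar v)\xi, B(\bar v)\xi\rangle$ up to sign conventions, i.e. a manifestly non-positive (in the curvature-as-Hermitian-form sense, equivalently the right sign for the desired inequality) term. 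So after pairing with $v \otimes \bar v$ one gets, for every unit $\xi \in F_x$,
\begin{eqnarray*}
  \langle \Theta_F(v,\bar v)\xi,\xi\rangle = \langle \Theta_G(v,\bar v)\xi,\xi\rangle + \| B(\bar v)\xi \|^2 \ge \langle \Theta_G(v,\bar v)\xi,\xi\rangle,
\end{eqnarray*}
which is precisely the claim that the curvature form of $F$ is $\ge$ the curvature form of $G$ restricted to $F$. The extra hypothesis that $G$ has rank at least $2$ is what makes $F$ a nonzero bundle for the statement to be non-vacuous.

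To organize the write-up I would: (1) fix the adapted normal frame and record that in it $h = \mathrm{Id} + O(|z|^2)$ at $x_0$, so $\theta(x_0)=0$ and $\Theta(x_0) = \bar\partial\theta(x_0)$; (2) recall that the Chern connection of the metric subbundle $F$ is $D_F = \mathrm{pr}_F \circ D_G$, hence $D_G\big|_F = D_F + B$ with $B = \mathrm{pr}_E \circ D_G|_F$ the second fundamental form, a $(1,0)$-form; (3) differentiate this relation once more and project to $F$ to obtain $\Theta_F = \Theta_{FF} - B \wedge B^\ast$ evaluated appropriately — being careful that since $F$ is holomorphic, $B$ is of type $(1,0)$ and $B^\ast$ of type $(0,1)$, so only the $B^\ast \wedge B$-type cross term survives in the $(1,1)$-part; (4) pair with $v\otimes\bar v$ and a unit vector and read off the sign. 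The main obstacle, and the only place care is genuinely needed, is the sign and conjugation bookkeeping in step (3): one must keep straight that "curvature form greater than or equal to" means as Hermitian forms on $T_xM \otimes F_x$, that the second-fundamental-form term enters $\Theta_F$ with the sign making $F$ more positively curved (consistent with the general principle that subbundles are less curved and quotients — here realized as $F \cong G/E$ — are more curved), and that the hypothesis bounding the bisectional curvature of $M$ from below will later be applied to $G = TM$ itself so the conventions must match those in the cited Definition. Since this is verbatim the computation on page 79 of \cite{griffiths_harris}, I would present it compactly, stating the Gauss equation and the sign, rather than rederiving the Chern connection formalism from scratch.
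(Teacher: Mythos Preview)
Your proposal is correct and matches the paper's approach exactly: the paper gives no proof at all, merely citing this as ``a computation from page 79 of \cite{griffiths_harris}'', and your outline is a faithful sketch of precisely that Griffiths--Harris argument (curvature increases in quotients via the second-fundamental-form term). One minor imprecision worth fixing in the write-up is that you cannot take a \emph{holomorphic} frame adapted to both $E$ and $F=E^\perp$ since $F$ is only $C^\infty$; the standard fix, as in Griffiths--Harris, is to take a holomorphic frame with the first block spanning $E$ and identify $F\cong G/E$, exactly as you note later, so there is no real gap.
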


\begin{lemma}\label{lem:cur_k}
  Let $(X,\omega,J)$ be K\"ahler and
  let $u:S^2\rightarrow X$ be a $J$--holomorphic
  sphere. Assume that the the holomorphic bisectional curvature of
  $(X,J)$ is bounded from below by $c>\pi k/\omega[u]$. Then the
  pullback bundle $u^\ast TX$ has no holomorphic line-subbundle with
  first Chern class less than or equal to $k$.
\end{lemma}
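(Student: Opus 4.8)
The strategy is to argue by contradiction: suppose $L \subset u^\ast TX$ is a holomorphic line-subbundle with $c_1(L) = \ell \le k$, and derive a curvature estimate that violates the hypothesis $c > \pi k/\omega[u]$. First I would pull back the K\"ahler metric to $u^\ast TX$; since $u$ is $J$-holomorphic, the induced connection on $u^\ast TX$ is the pullback of the Chern connection and its curvature is the pullback of the curvature of $TX$. The holomorphic bisectional curvature bound from below by $c$ translates, after pullback along the holomorphic map $u$, into a lower bound on the curvature form of $u^\ast TX$ in the relevant directions. The key point is that for any $J$-invariant plane tangent to $X$ containing $du(T_zS^2)$ and any other $J$-invariant plane, the bisectional curvature controls the relevant component of the curvature form evaluated on $u^\ast TX$.

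Next I would invoke Lemma \ref{lem:quotient_curvature}: writing $L^\perp$ for the orthogonal complement so that $u^\ast TX$ splits smoothly (not holomorphically) as $L \oplus L^\perp$, the curvature form of $L$ as a subbundle is \emph{less than or equal to} the curvature of $u^\ast TX$ restricted to $L$ — more precisely, the second fundamental form contributes with a sign that makes the subbundle curvature smaller. Actually, for a holomorphic sub\emph{bundle} $L$, its curvature is bounded above by the ambient curvature restricted to $L$; combined with the lower bound $\ge c\,\omega$-type estimate on the ambient curvature coming from the bisectional hypothesis, this gives a pointwise lower bound on the curvature form $\Theta_L$ of $L$ of the form $\Theta_L \ge c\, u^\ast\omega$ (in the appropriate sense on the Riemann surface $S^2$). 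Integrating over $S^2$ via Chern-Weil then yields
\begin{eqnarray*}
  2\pi \ell = 2\pi\, c_1(L) = \int_{S^2} \frac{i}{2\pi}\Theta_L \cdot 2\pi
  \ge \int_{S^2} c\, u^\ast\omega = c\,\omega[u] > \pi k \ge \pi \ell,
\end{eqnarray*}
which forces $2\pi\ell > \pi\ell$, a contradiction once one tracks the constants carefully; the precise normalization will show the inequality $c_1(L) > k$ is impossible, i.e. every holomorphic line-subbundle has $c_1 \le k$ fails, giving the claim. (I would double-check the factor of $2$ and whether the conclusion should read ``$\le k$'' or ``$< k$'' against the normalization $c > \pi k/\omega[u]$; the hypothesis is stated with a strict inequality precisely to absorb this.)

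The main obstacle I anticipate is making the inequality ``ambient curvature $\ge c$ in bisectional directions'' into a clean statement about the curvature \emph{two-form} of $u^\ast TX$ restricted to the line $L$, because the bisectional curvature $R(X,JX,Y,JY)$ only controls one particular sectional-type component, and one needs to check that the relevant entry of $\Theta_{u^\ast TX}$ paired against a section of $L$ is exactly of this form — i.e. that $X$ can be taken to be $du(\partial_z)$ (up to normalization) and $Y$ a unit generator of $L_z$. Since $u$ is holomorphic, $du(T_zS^2)$ is a $J$-invariant line in $T_{u(z)}X$, so this works pointwise wherever $du \ne 0$; at critical points of $u$ one argues by continuity or notes the set is discrete. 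The second delicate point is the correct direction of the inequality in Lemma \ref{lem:quotient_curvature} as applied here: the cited computation from Griffiths--Harris bounds the quotient bundle curvature from above and hence the orthogonal complement's curvature from below, so one must phrase $L$ as a quotient or dualize appropriately — replacing $L$ by the quotient $u^\ast TX / L^\perp \cong L$ and noting $c_1$ is unchanged. Once these two sign/normalization issues are pinned down, the Chern--Weil integration is routine.
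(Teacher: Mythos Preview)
Your overall strategy---Chern--Weil on a line subbundle together with the Griffiths--Harris curvature comparison---is exactly the paper's approach, and you have correctly spotted the crux: the inequality in Lemma~\ref{lem:quotient_curvature} goes the wrong way for a \emph{sub}bundle, so one must realize the line as a \emph{quotient}. However, your proposed fix has a genuine gap. You suggest replacing $L$ by $u^\ast TX / L^\perp$, but $L^\perp$ is only the \emph{metric} orthogonal complement and is in general not a holomorphic subbundle; Lemma~\ref{lem:quotient_curvature} (the Griffiths--Harris computation) requires the subbundle being quotiented out to be holomorphic, so the inequality is simply unavailable for $u^\ast TX / L^\perp$. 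Dualizing does not help either, since dualizing reverses both the curvature bound and the sign of $c_1$ simultaneously.

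The paper's resolution is a step you are missing: invoke Grothendieck's splitting theorem on $S^2$ to choose a \emph{holomorphic} complement $E$ to the line $F$ inside $u^\ast TX$. Then $E$ is a genuine holomorphic subbundle, so Lemma~\ref{lem:quotient_curvature} applies to its orthogonal complement $E^\perp$ (which is canonically the quotient $u^\ast TX / E$), giving curvature of $E^\perp$ bounded below by the ambient curvature. Since $u^\ast TX = E \oplus F$ holomorphically and $u^\ast TX = E \oplus E^\perp$ smoothly, one has $c_1(E^\perp)=c_1(F)$, and the Chern--Weil integral then yields
\[
c_1(F)=c_1(E^\perp)=\frac{1}{2\pi}\int_{S^2}K \;\ge\; \frac{1}{2\pi}\int_{S^2} u^\ast H(\cdot,F)\;\ge\; \frac{c}{2\pi}\int_{S^2}\norm{du}^2\,\dvol \;=\; \frac{c}{\pi}\,\omega[u] \;>\; k,
\]
using the energy identity $\int_{S^2}\norm{du}^2\,\dvol = 2\,\omega[u]$ for a holomorphic map. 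This directly contradicts $c_1(F)\le k$. (Your displayed inequality $2\pi\ell > \pi\ell$ is vacuous; once the quotient trick is done correctly the constants line up without any ambiguity.)
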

\begin{proof}
  Let $F$ be a holomorphic line-subbundle of $u^\ast TX$, and let $E$
  be a complementary holomorphic subbundle, which exists by a result
  of Grotherndieck \cite{grothendieck} if the real dimension of $X$ is at
  least 4 and is taken to be empty otherwise. Let $E^\perp$ denote the
  orthogonal complement
  of $E$ in $u^\ast TX$ and denote the curvature of $E^\perp$ with
  respect to the connection induced by $u^\ast TX$ by $K$. By Lemma
  \ref{lem:quotient_curvature} we know that $K(\cdot)\ge u^\ast
  H(\cdot,F)$ on any complex frame. Then
  \begin{eqnarray*}
    c_1(F)=c_1(E^\perp)=\frac1{2\pi}\int_{S^2}K
    \ge\frac1{2\pi}\int_{S^2}u^\ast H(\cdot,F)
    \ge\frac1{2\pi}c\int_{S^2}\norm{du}^2\dvol
    >k.
  \end{eqnarray*}
\end{proof}

Recall Definition \ref{def:superregular} for our use of the terms regular and superregular.
\begin{lemma}\label{lem:superregular}
  Let $(X,\omega,J)$ be K\"ahler so that the holomorphic bisectional
  curvature is bounded from below by $c>-2\pi/\omega(A)$, where $A\in
  H_2(X;\setZ)$.  Then any $J$--holomorphic sphere in the class of $A$
  is regular.

  If furthermore $u$ is immersed, $c_1(A)=2$, and the holomorphic
  bisectional curvature is bounded from below by $c>-\pi/\omega(A)$,
  then $u$ is also superregular.
\end{lemma}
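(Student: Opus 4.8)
The plan is to use integrability of $J$ to turn both assertions into statements about the Grothendieck splitting type of $u^\ast TX$, with Lemma~\ref{lem:cur_k} as the only geometric input. Since $J$ is integrable, $u^\ast TX$ carries a holomorphic structure for which the linearized operator $D_u$ is the Dolbeault operator $\bar\partial$; and when $u$ is immersed, the tangent bundle of the curve gives a holomorphic exact sequence $0\to TS^2\to u^\ast TX\to N_u\to 0$ whose induced operator on the normal bundle is $\bar\partial_{N_u}$. By Grothendieck's theorem, $u^\ast TX\cong\bigoplus_{i=1}^n\mathcal O(a_i)$ with $a_1\ge\cdots\ge a_n$ and $\sum_i a_i=c_1(A)$. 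Then $\coker D_u\cong H^1(S^2,u^\ast TX)=\bigoplus_i H^1(S^2,\mathcal O(a_i))$, so $u$ is regular precisely when $a_i\ge -1$ for all $i$; and if $N_u$ turns out to be trivial, then $\ker\bar\partial_{N_u}=H^0(S^2,N_u)$ is the space of constant sections, which form a pointwise frame of $N_u$, so $u$ is superregular.

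For the first statement, I would note that each summand $\mathcal O(a_i)$ is a holomorphic line subbundle of $u^\ast TX$. Lemma~\ref{lem:cur_k} with $k=-2$ — applicable because $c>-2\pi/\omega[u]$ is the bound of that lemma with $k=-2$ — then shows $a_i>-2$, i.e.\ $a_i\ge -1$, for every $i$, so $\coker D_u=0$ and $u$ is regular.

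For the second statement I would first apply Lemma~\ref{lem:cur_k} with $k=-1$ (applicable since $c>-\pi/\omega[u]$) to conclude $a_i\ge 0$ for all $i$; together with $\sum_i a_i=c_1(A)=2$ this leaves each $a_i\in\{0,1,2\}$. Since $u$ is immersed, the inclusion $TS^2\cong\mathcal O(2)\hookrightarrow\bigoplus_i\mathcal O(a_i)$ is a nonzero holomorphic bundle map, and as $\mathrm{Hom}(\mathcal O(2),\mathcal O(a_i))\cong\mathcal O(a_i-2)$ has a nonzero holomorphic section only when $a_i\ge 2$, some $a_i$ must equal $2$. Hence the splitting type is $\mathcal O(2)\oplus\mathcal O^{\oplus(n-1)}$ and $TS^2$ is identified with the $\mathcal O(2)$-summand, so $N_u\cong\mathcal O^{\oplus(n-1)}$ is trivial; by the reduction above, $u$ is superregular with the constant sections of $N_u$ as a superregular basis.

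The one substantive ingredient here is Lemma~\ref{lem:cur_k}, which is already established; the rest is standard bundle theory over $\P^1$. The steps deserving a word of justification are the identification $D_u=\bar\partial$ (the Nijenhuis term in the general formula for $D_u$ vanishes because $J$ is integrable) and, in the immersed case, that $N_u$ is genuinely the holomorphic quotient bundle so that its linearized operator is $\bar\partial_{N_u}$ — but this is robust, since what is actually used is only that a degree-zero, rank-$(n-1)$ holomorphic bundle over $\P^1$ with no negative summand is trivial and therefore admits a pointwise frame of holomorphic (Jacobi) sections.
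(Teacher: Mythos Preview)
Your proposal is correct and follows essentially the same line as the paper: reduce both assertions to the Grothendieck splitting type via Lemma~\ref{lem:cur_k}, concluding $a_i\ge -1$ for regularity and $a_i\ge 0$ (hence $N_u$ trivial) for superregularity. The only cosmetic difference is that the paper phrases the second step directly in terms of line subbundles of the normal bundle (implicitly using that the exact sequence $0\to TS^2\to u^\ast TX\to N_u\to 0$ splits holomorphically over $\P^1$ so that such subbundles embed in $u^\ast TX$), whereas you argue on $u^\ast TX$ first and then identify $TS^2$ with the $\mathcal O(2)$ summand---your route makes the appeal to Lemma~\ref{lem:cur_k} slightly cleaner.
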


\begin{proof}
  Let $u:S^2\rightarrow X$ be a $J$--holomorphic curve representing
  $A$. By Lemma \ref{lem:cur_k}, and using that
  $c>-2\pi/\omega(A)$, the pullback tangent bundle $u^\ast TX$ does
  not have a holomorphic
  line-subbundle with first Chern class less than $-1$. Thus $u$ is
  regular by Lemma 3.3.1 in \cite{mcduff2}.

  If furthermore $u$ is immersed and $c>-\pi/\omega(A)$, then every
  holomorphic line-subbundle of the normal bundle has first Chern
  class $\ge 0$. Since $u$ is immersed and $c_1(A)=2$, the first Chern
  class of the normal bundle is 0. Thus any holomorphic line-subbundle
  of the holomorphic normal bundle has first Chern class 0 and has a
  superregular basis. So the normal bundle to $u$ has a superregular
  basis and $u$ is superregular.
\end{proof}

\begin{corollary}\label{cor:integrable_foln}
  Let $J_I^c$ be the space of integrable compatible complex structures
  on $(X^{2n},\omega)$ with holomorphic bisectional curvature bounded from
  below by $c>-2\pi/\omega(A)$, where $A\in H_2(X;\setZ)$ with
  $c_1(A)=2$. Further assume that there exists a $J_0$--holomorphic
  foliation of $X$ by spheres in the class of $A$ for some
  $J_0\in\J_I^c$.

  Then $X$ is foliated by $J$--holomorphic spheres for any
  $J\in\J_I^c$ in the path--component of $J_0$.
\end{corollary}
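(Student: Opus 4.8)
The plan is to run a continuity argument along a path $J_t$, $t\in[0,1]$, in $\J_I^c$ joining $J_0$ to the given $J$, showing that the set of $t$ for which $X$ is foliated by $J_t$-holomorphic spheres in the class $A$ is nonempty, open, and closed in $[0,1]$. Nonemptiness is the hypothesis at $t=0$. The structural input comes from Lemma \ref{lem:superregular}: since every $J_t$ lies in $\J_I^c$ and $c_1(A)=2$, every $J_t$-holomorphic sphere in the class $A$ is regular, and moreover every \emph{immersed} one is superregular. Thus for each $t$ the moduli space $\M_t$ of $J_t$-holomorphic spheres in class $A$ is a smooth manifold of the expected dimension, and the evaluation map $e_t:\M_t\times_G S^2\to X$ is well-defined; wherever a curve is immersed, superregularity guarantees $e_t$ is a submersion there (using the interpretation recalled after Definition \ref{def:superregular}, that superregularity is exactly transversality of the evaluation map to points of the image).

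For \textbf{openness}, I would argue that if $e_{t_0}$ restricted to a component $\M_{t_0}'$ is a homeomorphism onto $X$, then for $t$ near $t_0$ the same holds. Since $J_{t_0}$-curves in class $A$ are embedded (being a foliation, their images are disjoint and fill $X$), they are in particular immersed, hence superregular, so $e_{t_0}$ is a local diffeomorphism; being a homeomorphism it is a diffeomorphism onto $X$. By Gromov compactness (in the presence of the taming form $\omega$) together with the fact that $A$ is $\omega$-minimal, no bubbling can occur, so $\M_t$ varies continuously and for $t$ close to $t_0$ the component $\M_{t_0}'$ deforms to a component $\M_t'$ with $e_t:\M_t'\times_G S^2\to X$ still a local diffeomorphism and $C^0$-close to $e_{t_0}$; a proper local homeomorphism onto a connected manifold that is close to a homeomorphism is itself a homeomorphism, giving a foliation for nearby $t$.

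For \textbf{closedness}, suppose $X$ is foliated by $J_{t_i}$-holomorphic spheres for $t_i\to t_\infty$. Take any point $x\in X$; for each $i$ there is a curve $u_i$ in the foliating component through $x$. By Gromov compactness and $\omega$-minimality of $A$, a subsequence converges to a $J_{t_\infty}$-holomorphic sphere $u_\infty$ in class $A$ through $x$, with no bubbling. This produces, for every $x$, a $J_{t_\infty}$-curve through $x$; one then checks that the limits assemble into a component $\M_{t_\infty}'$ of $\M_{t_\infty}$ whose evaluation map is surjective, and that the limit curves are pairwise disjoint (if two limits met, a standard positivity-of-intersection / dimension argument applied to the disjoint approximating curves gives a contradiction). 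Disjoint curves filling $X$ that are all regular and immersed are embedded and superregular, so $e_{t_\infty}$ is a bijective local diffeomorphism, hence a diffeomorphism; thus $t_\infty$ lies in the set.

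The \textbf{main obstacle} is the closedness step, specifically ensuring that the Gromov limits through different points are mutually disjoint and together form a single component on which $e_{t_\infty}$ is injective, rather than, say, a multiple cover or a configuration where curves coincide or intersect. Here the hypotheses are exactly what is needed: $\omega$-minimality of $A$ rules out bubbling and rules out the limit being a multiple cover (a cover would represent a proper multiple of a class of smaller area, impossible as $A$ is $\omega$-minimal with $GW$-count — or here simply with the foliation hypothesis — forcing simplicity), and the curvature bound via Lemma \ref{lem:superregular} upgrades regularity to superregularity on the immersed locus, which is what converts ``surjective evaluation map'' into ``foliation.'' Controlling intersections in the limit is the delicate point; I expect one argues that the approximating foliations give, for each $i$, a homeomorphism $\M_{t_i}'\times_G S^2\cong X$, so $\M_{t_i}'$ is itself diffeomorphic to a fixed manifold, and the limits of curves through distinct leaves cannot collide because their images in the limit vary continuously over a parameter space of the same dimension as $X/{\sim}$, filling $X$ without overlap by a degree/invariance-of-domain argument.
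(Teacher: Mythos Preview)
Your approach differs substantially from the paper's, and there are genuine gaps.

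First, two hypotheses you invoke are not actually available. You claim via Lemma~\ref{lem:superregular} that every immersed $J_t$-sphere in class $A$ is superregular, but that lemma requires the stronger bound $c>-\pi/\omega(A)$; the corollary only assumes $c>-2\pi/\omega(A)$, which yields regularity but not superregularity. You also repeatedly use that $A$ is $\omega$-minimal to exclude bubbling and multiple covers, but $\omega$-minimality is not among the hypotheses of this corollary (it appears in Theorem~\ref{thm:integrable}, not here). Without these, both your openness step (which needs $e_{t_0}$ to be a local diffeomorphism) and your closedness step (which needs compactness of $\M_{t_i}$) are unjustified. Your closedness argument is in any case only a sketch: the claim that Gromov limits through distinct points remain disjoint is exactly the heart of the matter, and the degree/invariance-of-domain heuristic you offer does not settle it.

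The paper's argument is shorter and exploits integrability in a way your proof does not. Since every curve along the path is regular (this \emph{does} follow from $c>-2\pi/\omega(A)$), the universal one-pointed moduli space $\M^1$ is a smooth $(2n{+}1)$-manifold and the projection to $[0,1]$ is a submersion; the connected component $\tilde M$ containing the initial foliation therefore has smooth $2n$-dimensional slices $\tilde M_s$. Because each $J_s$ is integrable, $\tilde M_s$ carries a natural complex structure and the evaluation map $ev_s:\tilde M_s\to X$ is \emph{holomorphic}. Its degree is independent of $s$ and equals $1$ at $s=0$; a degree-one holomorphic map between equidimensional compact complex manifolds is a diffeomorphism. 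No superregularity, no open--closed dichotomy, and no Gromov-limit disjointness argument are needed. The point you are missing is that holomorphicity of $ev_s$ is a far stronger global constraint than smoothness, and it is exactly what integrability buys here.
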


\begin{proof}
  Let $J=J_1\in\J_I^c$ be connected to $J_0$ via a path $\{J_t\}_{t\in
    [0,1]}$ and let $\M$ denote the family space of
  $\{J_t\}_{t\in [0,1]}$--holomorphic spheres in the class of
  $A$ as in Equation (\ref{eq:M}) and let
  \begin{eqnarray*}
    \M^1=\M\times_G S^2
  \end{eqnarray*}
  denote the component of the one--pointed moduli
  space, modulo automorphisms. 

  By Lemma \ref{lem:superregular} all curves in $\M^1$ are regular,
  so $\M^1$ is a smooth manifold (of dimension $2n+1$) and the
  projection onto the $[0,1]$--factor is a submersion.

  Denote the connected component of $\M^1$ containing the initial
  $J_0$--holomorphic foliation by $\tilde M$ and let $\tilde
  M_s=\{(u,p,t)\in \tilde M|\,t=s\}\subset \M$. Again, by Lemma
  \ref{lem:superregular}, all curves in $\M_s$ are regular, so $\M_s$
  is a smooth manifold (of dimension $2n$). The evaluation map
  $ev_s:\M_s\rightarrow X$ is holomorphic with respect to the natural
  complex structure on $\M_s$. It has degree 1, since $ev_0$ is of
  degree 1 by assumption. Thus $ev_s$ is a diffeomorphism for all
  $s\in[0,1]$ and $\M_s$ is a smooth foliation of $X$. 
\end{proof}

\begin{remark}
  Note that any $J$--holomorphic sphere $u$ (for integrable $J$) that
  is part of a smooth foliation is automatically regular and
  superregular. Indeed, any line subbundle of the normal bundle of $u$
  has non-negative first Chern class, since it has holomorphic
  sections induced by nearby curves. Since the first Chern class of
  the normal bundle at a leaf of a foliation is trivial all linear
  subbundles must have first Chern class 0, so the curve is regular
  and superregular.
\end{remark}

Under more stringent curvature assumptions, we can prove the existence
of foliations given conditions on a Gromov-Witten invariant. This is
the substance of Theorem \ref{thm:integrable} that we are now prepared
to prove.

\begin{proof}[Proof of Theorem \ref{thm:integrable}]
  Let $J\in\J_I^c$ and let $\M$ be the space of $J$--holomorphic
  spheres in the class $A$. $\M$ is non-empty since the GW count is
  non-zero. Let $u\in\M$. By Lemma \ref{lem:cur_k} we know that all
  holomorphic linear subbundles of $u^\ast TX$ have first Chern class
  greater than or equal to 0 and $u$ is regular. We claim that $u$ is
  immersed. If not, then the first Chern class of the tangent bundle
  is at least 4. But by the dimension formula we know that $c_1(A)=2$,
  so the holomorphic normal bundle would contain a linear subbundle
  with first Chern class less than 0 which is impossible. Thus $u$ is
  superregular.

  Since every $u\in\M$ is regular and superregular, the evaluation map
  is transverse to any $x\in X$ and the curves contribute positively to
  the GW count of a point class. Thus the evaluation map
  $ev:\M\rightarrow X$ has degree one and is holomorphic, so it is a
  diffeomorphism, showing that $X$ is foliated by embedded holomorphic
  spheres.
\end{proof}

\providecommand{\bysame}{\leavevmode\hbox to3em{\hrulefill}\thinspace}
\providecommand{\MR}{\relax\ifhmode\unskip\space\fi MR }
\providecommand{\MRhref}[2]{%
  \href{http://www.ams.org/mathscinet-getitem?mr=#1}{#2}
}
\providecommand{\href}[2]{#2}


\begin{thebibliography}{Don02}

\bibitem[AM00]{abreu_mcduff}
Miguel Abreu and Dusa McDuff, \emph{Topology of symplectomorphism groups of
  rational ruled surfaces}, J. Amer. Math. Soc. \textbf{13} (2000), no.~4,
  971--1009 (electronic). \MR{MR1775741 (2001k:57035)}

\bibitem[Don02]{donaldson_discs}
S.~K. Donaldson, \emph{Holomorphic discs and the complex {M}onge-{A}mp\`ere
  equation}, J. Symplectic Geom. \textbf{1} (2002), no.~2, 171--196.
  \MR{MR1959581 (2003m:32037)}

\bibitem[GH78]{griffiths_harris}
P~Griffiths and J.~Harris, \emph{Principles of algrebraic geometry}, John Wiley
  \& Sons, 1978.

\bibitem[Gro57]{grothendieck}
A.~Grothendieck, \emph{Sur la classification des fibr\'es holomorphes sur la
  sph\`ere de {R}iemann}, Amer. J. Math. \textbf{79} (1957), 121--138.
  \MR{MR0087176 (19,315b)}

\bibitem[Gro85]{gromov}
M.~Gromov, \emph{Pseudo holomorphic curves in symplectic manifolds pseudo
  holomorphic curves in symplectic manifolds}, Invent. Math. \textbf{82}
  (1985), no.~2, 307--347.

\bibitem[Hin04]{hind_spheres}
R.~Hind, \emph{Lagrangian spheres in {$S\sp 2\times S\sp 2$}}, Geom. Funct.
  Anal. \textbf{14} (2004), no.~2, 303--318. \MR{MR2060197 (2005g:53151)}

\bibitem[Hir59]{hirsch}
Morris~W. Hirsch, \emph{Immersions of manifolds}, Trans. Amer. Math. Soc.
  \textbf{93} (1959), 242--276. \MR{MR0119214 (22 \#9980)}

\bibitem[Kat95]{kato}
Tosio Kato, \emph{Perturbation theory for linear operators}, Classics in
  Mathematics, Springer-Verlag, Berlin, 1995, Reprint of the 1980 edition.
  \MR{MR1335452 (96a:47025)}

\bibitem[KN96]{kobayashi_nomizu_2}
Shoshichi Kobayashi and Katsumi Nomizu, \emph{Foundations of differential
  geometry. {V}ol. {II}}, Wiley Classics Library, John Wiley \& Sons Inc., New
  York, 1996, Reprint of the 1969 original, A Wiley-Interscience Publication.
  \MR{MR1393941 (97c:53001b)}

\bibitem[MS04]{mcduff2}
D.~McDuff and D.~Salamon, \emph{${J}$-holomorphic curves and symplectic
  topology}, 2 ed., American Mathematical Society, 2004.

\bibitem[Tau96]{SW_GW}
Clifford~H. Taubes, \emph{{${\rm SW}\Rightarrow{\rm Gr}$}: from the
  {S}eiberg-{W}itten equations to pseudo-holomorphic curves}, J. Amer. Math.
  Soc. \textbf{9} (1996), no.~3, 845--918. \MR{MR1362874 (97a:57033)}

\end{thebibliography}
\end{document}